\documentclass[11pt, reqno]{amsart}
\usepackage{amsmath, amsthm, amscd, amsfonts, amssymb, graphicx, color}
\usepackage[bookmarksnumbered, colorlinks, plainpages]{hyperref}
\usepackage{mathrsfs}
\newtheorem{theorem}{Theorem}[section]

\newtheorem{proposition}[theorem]{Proposition}
\newtheorem{corollary}[theorem]{Corollary}
\theoremstyle{definition}
\newtheorem{definition}[theorem]{Definition}
\newtheorem{example}[theorem]{Example}

\theoremstyle{remark}
\newtheorem{remark}[theorem]{Remark}
\numberwithin{equation}{section}

\begin{document}

 \title[Minimal Wittstock Envelopes]{Minimal Wittstock Envelopes  of Hermitian Completely Bounded Maps}

\author[  M. Kian]{  Mohsen Kian}

\address{Mohsen Kian:\, Department  of Mathematics,  University of Bojnord,  Bojnord 94531, Iran}
\email{kian@ub.ac.ir}

\renewcommand{\subjclassname}{\textup{2020} Mathematics Subject Classification}
\subjclass[2020]{Primary 46L07, 46L05 Secondary 46L10, 47A64}

\keywords{Hermitian completely bounded maps; completely positive majorants; Wittstock envelopes; Radon--Nikodym theorem;     von Neumann algebras; spectral projections; operator perspectives.}

\begin{abstract}
A Hermitian completely bounded map  $\Phi:\mathscr{A}\to B(\mathcal{H})$
  need not have a canonical completely positive analogue of an absolute value. We study this problem through the ordered set  $\mathsf{E}(\Phi)$
  of completely positive symmetric majorants of  $\Phi$,  which we call Wittstock envelopes.

 We show that every Wittstock envelope canonically determines a minimal
Wittstock envelope below it. However, minimal envelopes  need not be unique. We therefore study the stronger question of when a least completely positive majorant exists.  We obtain a criterion, in terms of central
supports of spectral projections, for the existence of least majorants below a
fixed envelope.  In finite dimensions, the Choi representation gives a simple characterization:
a least completely positive majorant exists if and only if \(\Phi\) or
\(-\Phi\) is completely positive.

Finally, we use the envelope-relative modulus to define envelope-relative
perspectives for Hermitian completely bounded maps.  These constructions extend
Okayasu's completely positive map perspectives, reduce to the usual
perspective in the completely positive case, and remain dependent in general on
the chosen Wittstock envelopes.
\end{abstract}
\maketitle

\section{Introduction}

Completely positive maps form one of the basic ordered structures in operator
algebras, matrix analysis, and quantum information theory. Their order
properties are closely related to Stinespring's dilation theorem
\cite{St}, Arveson's Radon--Nikodym theorem
\cite{Ar}, and, in finite dimensions, the Choi--Jamio{\l}kowski
correspondence \cite{Jami,Choi}. These results provide a
standard framework for studying completely positive maps through positive
operators associated with suitable dilations.

Hermitian completely bounded maps are naturally related to the completely
positive cone through Wittstock-type decomposition results
\cite{Witt,Haag,Paul}. For a Hermitian completely
bounded map \(\Phi\), we consider the ordered set \(\mathsf E(\Phi)\) of
completely positive maps which dominate \(\Phi\) in both signs. The central
question is whether this set contains canonical elements, in particular a least
element which could serve as a completely positive analogue of an absolute
value.

Radon--Nikodym type methods for completely positive maps have been developed in
several directions. Belavkin and Staszewski \cite{Bel-Sta}
proved a Radon--Nikodym theorem for completely positive maps. Raginsky
\cite{Rag} studied Radon--Nikodym derivatives of quantum operations
and their order intervals. Gheondea and Kavruk studied related questions on
absolute continuity for operator-valued completely positive maps
\cite{Gheo-Kav}, and also the relation between Arveson's
Radon--Nikodym derivative and  similarity  for
completely bounded maps \cite{Gheo-Kav2}. These works provide
part of the background for the present use of Radon--Nikodym derivatives in the
study of completely positive majorants.

The finite-dimensional trace-preserving version of the majorant problem was
studied in \cite{Ki-Del-Kr}, where completely positive
trace-preserving majorants of Hermitian-preserving maps between matrix
algebras were characterized. The present paper removes the trace-preserving
constraint and studies the full order-theoretic set of completely positive
majorants. Thus the focus here is not an affine section of the Choi order
interval, but the order structure of all completely positive majorants.

The decomposition of linear maps into completely positive parts has a longer
history. Hadwin \cite{Hadw} studied dilation and Hahn-decomposition
methods for linear maps. In the finite-dimensional setting, Ando
\cite{An18} studied decompositions of positive maps between matrix
algebras as differences of completely positive, or super-positive, maps.
Related decomposition phenomena also appear in the study of tracial positive
maps and their applications in quantum information
\cite{Dad-Ki-Mos}. Related decomposition results are part of the
classical theory of completely bounded maps
\cite{Witt,Haag,Paul}. These works are part of the broader
background on positive and completely positive decompositions, while the
present paper focuses on the order structure of all symmetric completely
positive majorants of a fixed Hermitian completely bounded map.

This work is also connected with the theory of means and perspectives.
Operator means were introduced by Kubo and Ando \cite{K-A}, and the
geometric mean has a form-theoretic realization due to Pusz and Woronowicz
\cite{Pu-W}. More recently, Frenkel, Mosonyi, Vrana and Weiner
\cite{FMVW} studied weighted Kubo--Ando
geometric means in connection with composite quantum hypothesis testing and
introduced a related superoperator perspective function. Okayasu
\cite{Ok} developed geometric means, parallel sums, and
Lebesgue-type decompositions for completely positive maps on von Neumann
algebras. These works concern completely positive inputs. Here the starting point is a
Hermitian completely bounded map, and the completely positive maps used in the
construction arise from its Wittstock envelopes.

We study the ordered set \(\mathsf E(\Phi)\) of Wittstock envelopes of a
Hermitian completely bounded map \(\Phi\).  We show that every completely positive majorant admits a
canonical reduction to a minimal one, while minimal majorants need not be
unique and a least majorant need not exist. We identify the obstruction to the
existence of a least majorant by means of central supports in the relevant von
Neumann algebra and transfer this criterion to completely positive majorants
using Arveson's Radon--Nikodym theorem. In the Choi setting, this yields a
 sign-definiteness criterion for the existence of a least completely
positive majorant.

Thus a Hermitian completely bounded map need not have an intrinsic completely
positive absolute value. This   motivates the final section, where perspectives
for Hermitian completely bounded maps are formulated relative to a chosen
Wittstock envelope. We define envelope-relative sector perspectives and an
envelope-relative absolute perspective by applying Okayasu's completely
positive map perspective \cite{Ok} to the relative positive parts and relative moduli.

\section{Preliminaries}

Throughout the paper, \(\mathscr A\) denotes a unital \(C^*\)-algebra and
\(\mathcal H\) denotes a Hilbert space. We write \(B(\mathcal H)\) for the
algebra of bounded operators on \(\mathcal H\), and
\(\operatorname{CP}(\mathscr A,B(\mathcal H))\) for the cone of completely
positive maps from \(\mathscr A\) into \(B(\mathcal H)\). Elements of
\(C^*\)-algebras will usually be denoted by lower-case letters, such as
\(a,b,c\), and vectors in Hilbert spaces by \(\xi,\eta,\zeta\).

A map
\(\Phi:\mathscr A\to B(\mathcal H)\) is called Hermitian if
\[
   \Phi(a^*)=\Phi(a)^*,
   \qquad a\in\mathscr A .
\]
If \(\Phi,\Psi:\mathscr A\to B(\mathcal H)\) are Hermitian  maps, we
write
\[
   \Phi\leq_{\mathrm{cp}}\Psi
\]
whenever \(\Psi-\Phi\) is completely positive.

We shall use the following form of Stinespring's theorem. If
\(\Omega\in\operatorname{CP}(\mathscr A,B(\mathcal H))\), then there exist a
Hilbert space \(\mathcal K_\Omega\), a unital representation
\[
   \pi_\Omega:\mathscr A\to B(\mathcal K_\Omega),
\]
and a bounded operator \(V_\Omega:\mathcal H\to\mathcal K_\Omega\) such that
\[
   \Omega(a)=V_\Omega^*\pi_\Omega(a)V_\Omega,
   \qquad a\in\mathscr A .
\]
The triple \((\pi_\Omega,\mathcal K_\Omega,V_\Omega)\) is minimal if
\[
   \mathcal K_\Omega
   =
   \overline{\operatorname{span}}
   \{\pi_\Omega(a)V_\Omega\xi:
   a\in\mathscr A,\ \xi\in\mathcal H\}.
\]
Minimal Stinespring representations are unique up to unitary equivalence.

 Let
\(\Omega,\Theta\in\operatorname{CP}(\mathscr A,B(\mathcal H))\) and suppose
that
\[
   \Theta\leq_{\mathrm{cp}}\Omega .
\]
If
\[
   \Omega(a)=V_\Omega^*\pi_\Omega(a)V_\Omega,
   \qquad a\in\mathscr A,
\]
is a minimal Stinespring representation of \(\Omega\), then    Arveson's Radon--Nikodym theorem  guarantees the  existence of  a
unique positive contraction
\[
   D_\Theta^\Omega\in\pi_\Omega(\mathscr A)'
\]
such that
\[
   \Theta(a)
   =
   V_\Omega^*\pi_\Omega(a)D_\Theta^\Omega V_\Omega,
   \qquad a\in\mathscr A .
\]
The operator \(D_\Theta^\Omega\) is called the Radon--Nikodym derivative of
\(\Theta\) with respect to \(\Omega\).

We shall frequently use the following signed form of the preceding theorem.
Let \(\Phi:\mathscr A\to B(\mathcal H)\) be Hermitian and let $\Omega\in\operatorname{CP}(\mathscr A,B(\mathcal H))$ with $-\Omega\leq \Phi\leq \Omega$.  If
\[
   \Omega(a)=V_\Omega^*\pi_\Omega(a)V_\Omega,
   \qquad a\in\mathscr A,
\]
is a minimal Stinespring representation of \(\Omega\), then there exists a
unique self-adjoint contraction
\[
   D_\Phi^\Omega\in\pi_\Omega(\mathscr A)'
\]
such that
\[
   \Phi(a)
   =
   V_\Omega^*\pi_\Omega(a)D_\Phi^\Omega V_\Omega,
   \qquad a\in\mathscr A .
\]
Indeed, applying Arveson's theorem to
\[
   \frac{\Omega+\Phi}{2}
   \quad\text{and}\quad
   \frac{\Omega-\Phi}{2}
\]
gives positive contractions \(D_+\) and \(D_-\) in
\(\pi_\Omega(\mathscr A)'\) satisfying \(D_++D_-=I\), and then
\[
   D_\Phi^\Omega=D_+-D_- .
\]
We call \(D_\Phi^\Omega\) the signed Radon--Nikodym derivative of \(\Phi\) with
respect to \(\Omega\).

For \(T\in\pi_\Omega(\mathscr A)'\), set
\[
   \Omega_T(a)
   =
   V_\Omega^*\pi_\Omega(a)T V_\Omega,
   \qquad a\in\mathscr A .
\]
If \(T\geq0\), then \(\Omega_T\) is completely positive. Moreover, for
positive operators \(S,T\in\pi_\Omega(\mathscr A)'\),
\[
   S\leq T
   \quad\Longrightarrow\quad
   \Omega_S\leq_{\mathrm{cp}}\Omega_T .
\]
Thus the order interval of completely positive maps dominated by \(\Omega\) is
identified with the positive contractions in the commutant
\(\pi_\Omega(\mathscr A)'\).

We use standard notation for positive and negative parts. If \(x=x^*\) belongs
to a von Neumann algebra \(\mathscr M\), then
\[
   x=x_+-x_-,
   \qquad
   |x|=x_++x_-,
\]
where \(x_+,x_-\in\mathscr M_+\) and \(x_+x_-=0\). We denote the support
projections of \(x_+\) and \(x_-\) by \(s(x_+)\) and \(s(x_-)\), respectively.
If \(p\) is a projection in \(\mathscr M\), its central support is denoted by
\(c(p)\); that is, \(c(p)\) is the smallest central projection in
\(\mathscr M\) which dominates \(p\).

We shall distinguish between minimal and least elements in partially ordered
sets. If \(P\) is a partially ordered set, an element \(m\in P\) is minimal if
there is no \(p\in P\) such that \(p<m\). It is least if
\[
   m\leq p,
   \qquad p\in P .
\]
A least element is necessarily unique, while minimal elements need not be
unique.

Finally, in the finite-dimensional Choi setting, for a linear map
\[
   \Phi:\mathbb M_m\to B(\mathcal H),
\]
we write
\[
   C_\Phi=[\Phi(E_{ij})]_{i,j=1}^m
   \in \mathbb M_m(B(\mathcal H))
   \cong B(\mathcal H^{\oplus m}),
\]
where \(\{E_{ij}\}_{i,j=1}^m\) are the matrix units of \(\mathbb M_m\). The
Choi criterion says that \(\Phi\) is completely positive if and only if $ C_\Phi\geq0$.
Consequently, for Hermitian maps \(\Phi,\Psi:\mathbb M_m\to B(\mathcal H)\),
\[
   \Phi\leq_{\mathrm{cp}}\Psi
   \quad\Longleftrightarrow\quad
   C_\Phi\leq C_\Psi .
\]

\section{Minimal Wittstock envelopes}

Let $\Phi:\mathscr A\to B(\mathcal H)$
 be a Hermitian completely bounded map. We denote by
\[
   \mathsf E(\Phi)
   =
   \left\{
      \Omega\in \operatorname{CP}(\mathscr A,B(\mathcal H)):
      -\Omega\leq_{\mathrm{cp}}\Phi\leq_{\mathrm{cp}}\Omega
   \right\}
\]
the set of completely positive symmetric majorants of \(\Phi\). Elements of
\(\mathsf E(\Phi)\) will be called Wittstock envelopes of \(\Phi\). By
Wittstock's decomposition theorem, \(\mathsf E(\Phi)\) is nonempty.

We begin by showing that every Wittstock envelope canonically determines a
minimal Wittstock envelope below it.

\begin{definition}
Let \(\Omega\in\mathsf E(\Phi)\), and let
\[
   \Omega(a)=V_\Omega^*\pi_\Omega(a)V_\Omega,
   \qquad a\in\mathscr A,
\]
be a minimal Stinespring representation of \(\Omega\). Let
\[
   D_\Phi^\Omega\in\pi_\Omega(\mathscr A)'
\]
be the signed Radon--Nikodym derivative of \(\Phi\) with respect to
\(\Omega\). We define the \(\Omega\)-relative positive and negative parts of
\(\Phi\) by
\[
   \Phi_{\Omega,+}(a)
   :=
   V_\Omega^*\pi_\Omega(a)(D_\Phi^\Omega)_+V_\Omega,
   \qquad
   \Phi_{\Omega,-}(a)
   :=
   V_\Omega^*\pi_\Omega(a)(D_\Phi^\Omega)_-V_\Omega,
\]
and the \(\Omega\)-relative modulus of \(\Phi\) by
\[
   |\Phi|_\Omega(a)
   :=
   V_\Omega^*\pi_\Omega(a)|D_\Phi^\Omega|V_\Omega,
   \qquad a\in\mathscr A .
\]
\end{definition}
Here
\[
   D_\Phi^\Omega
   =
   (D_\Phi^\Omega)_+-(D_\Phi^\Omega)_-,
   \qquad
   |D_\Phi^\Omega|
   =
   (D_\Phi^\Omega)_++(D_\Phi^\Omega)_-
\]
is the Jordan decomposition of \(D_\Phi^\Omega\) in the von Neumann algebra
\(\pi_\Omega(\mathscr A)'\).

The maps \(\Phi_{\Omega,+}\), \(\Phi_{\Omega,-}\), and \(|\Phi|_\Omega\) are
completely positive, and
\[
   \Phi=\Phi_{\Omega,+}-\Phi_{\Omega,-},
   \qquad
   |\Phi|_\Omega=\Phi_{\Omega,+}+\Phi_{\Omega,-}.
\]
These maps are independent of the chosen minimal Stinespring representation of
\(\Omega\), up to the canonical unitary equivalence between minimal
Stinespring representations.

\begin{theorem}
\label{thm-env-red}
Let $\Phi:\mathscr A\to B(\mathcal H)$ be Hermitian and completely bounded, and let
$  \Omega\in\mathsf E(\Phi)$. Then
\[
   |\Phi|_\Omega\in\mathsf E(\Phi)
   \qquad\text{and} \qquad
   |\Phi|_\Omega\leq_{\mathrm{cp}}\Omega.
\]
Moreover, \(|\Phi|_\Omega\) is a minimal element of \(\mathsf E(\Phi)\).
\end{theorem}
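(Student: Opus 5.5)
The proof works entirely inside the commutant $\mathscr M:=\pi_\Omega(\mathscr A)'$, using the identification of the order interval below $\Omega$ with positive contractions of $\mathscr M$. Write $D=D_\Phi^\Omega$. The first two assertions are short. Since $-I\le D\le I$, we have $-|D|\le D\le |D|\le I$, because $|D|\pm D=2D_\pm\ge0$ and $\|\,|D|\,\|=\|D\|\le1$. The map $T\mapsto\Omega_T$ is linear and positive on $\mathscr M$, and $\Omega_I=\Omega$. Applying it to these inequalities gives $-|\Phi|_\Omega\le_{\mathrm{cp}}\Phi\le_{\mathrm{cp}}|\Phi|_\Omega$ and $|\Phi|_\Omega\le_{\mathrm{cp}}\Omega$. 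This proves $|\Phi|_\Omega\in\mathsf E(\Phi)$.

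For minimality, suppose $\Theta\in\mathsf E(\Phi)$ with $\Theta\le_{\mathrm{cp}}|\Phi|_\Omega$. I will show $\Theta=|\Phi|_\Omega$. Since $\Theta\le_{\mathrm{cp}}\Omega$, Arveson's theorem gives a positive contraction $T=D_\Theta^\Omega\in\mathscr M$ with $\Theta=\Omega_T$. The step I expect to be the main obstacle is transferring the inequalities $\Theta\le_{\mathrm{cp}}|\Phi|_\Omega$ and $\Theta\pm\Phi\ge_{\mathrm{cp}}0$ back to operator inequalities in $\mathscr M$. For this I need the converse of the displayed monotonicity: if $S\in\mathscr M$ is self-adjoint and $\Omega_S$ is completely positive, then $S\ge0$. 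I will prove this from minimality of the Stinespring triple. Complete positivity of $\Omega_S$ gives
\[
\sum_{i,j}\langle \pi_\Omega(a_i^*a_j)SV_\Omega\xi_j,V_\Omega\xi_i\rangle\ge0 .
\]
Because $S$ commutes with $\pi_\Omega(\mathscr A)$, the left side equals $\langle S\eta,\eta\rangle$ with $\eta=\sum_j\pi_\Omega(a_j)V_\Omega\xi_j$. Such vectors $\eta$ are dense in $\mathcal K_\Omega$, so $S\ge0$. This is the same uniqueness mechanism as in Arveson's theorem, and it makes $T\mapsto\Omega_T$ an order isomorphism on self-adjoint elements of $\mathscr M$.

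With this converse, the hypotheses on $\Theta$ become $-T\le D\le T$ and $T\le|D|$ in $\mathscr M$. The remaining task is the operator-algebraic fact that these force $T=|D|$. Let $p=s(D_+)$ and $q=s(D_-)$; these are orthogonal projections in $\mathscr M$, and $D$ commutes with both. From $T-D\ge0$, $T+D\ge0$ and $|D|-T\ge0$, I will compress by $p$. On one side, $pTp\ge pDp=D_+$. On the other, $pTp\le p|D|p=D_+$. Hence $pTp=D_+$, and in the same way $qTq=D_-$.

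Next let $r=1-p-q$, the kernel projection of $D$. Then $0\le rTr\le r|D|r=0$, and since $T\ge0$ this gives $T^{1/2}r=0$, so $Tr=rT=0$. Finally consider the positive operator $X=|D|-T$. Its compressions satisfy $pXp=0$ and $qXq=0$. For a positive $X$, $pXp=0$ implies $Xp=0$, and likewise $Xq=0$; together with $Xr=-Tr=0$ this gives $X=0$. Hence $T=|D|$, and therefore $\Theta=\Omega_T=|\Phi|_\Omega$, which proves minimality. I do not expect to need central supports for this part. They should only enter later, for the question of a least element.
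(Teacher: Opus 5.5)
Your proposal is correct and follows essentially the same route as the paper. It translates the hypotheses on the smaller envelope into $-T\le D\le T\le |D|$ in $\pi_\Omega(\mathscr A)'$, then compresses by the positive, negative and kernel spectral projections of $D$ to force $|D|-T=0$. The only difference is that you prove directly, via density of $\operatorname{span}\{\pi_\Omega(a)V_\Omega\xi\}$, that $S\mapsto\Omega_S$ is an order embedding on self-adjoint elements of the commutant. The paper instead gets positivity of $|D|-C$ and $C\pm D$ by applying Arveson's theorem to $|\Phi|_\Omega-\Gamma$ and $(\Gamma\pm\Phi)/2$, after checking these lie below $\Omega$, and then invoking uniqueness of the derivative.
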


\begin{proof}
Let   $ \Omega(\cdot)=V^*\pi(\cdot)V$
be a minimal Stinespring representation of \(\Omega\), and consider    $D:=D_\Phi^\Omega\in\pi(\mathscr A)'$. Then $D$ is Hermitian,  $ -I\leq D\leq I$, and $  \Phi(\cdot)=V^*\pi(\cdot)DV$.
   Since \(|D|\in\pi(\mathscr A)'\) is positive and satisfies \(|D|\leq I\), the
map
\[
   |\Phi|_\Omega(a)=V^*\pi(a)|D|V\qquad a\in\mathscr{A}
\]
is completely positive and satisfies $|\Phi|_\Omega\leq_{\mathrm{cp}}\Omega.$
 Moreover, it follows from $ -|D|\leq D\leq |D|$ that the mappings
\[
   |\Phi|_\Omega+\Phi
   =
   V^*\pi(\,\cdot\,)(|D|+D)V
\]
and
\[
   |\Phi|_\Omega-\Phi
   =
   V^*\pi(\,\cdot\,)(|D|-D)V
\]
are completely positive. Therefore
\[
   -|\Phi|_\Omega\leq_{\mathrm{cp}}\Phi\leq_{\mathrm{cp}}|\Phi|_\Omega,
\]
and consequently
\[
   |\Phi|_\Omega\in\mathsf E(\Phi).
\]
We show the minimality of $|\Phi|_\Omega$.   Let
 $ \Gamma\in\mathsf E(\Phi)$ satisfy $ \Gamma\leq_{\mathrm{cp}}|\Phi|_\Omega.$
Since \(|\Phi|_\Omega\leq_{\mathrm{cp}}\Omega\), we also have $   \Gamma\leq_{\mathrm{cp}}\Omega.$ Let $  C:=D_\Gamma^\Omega\in\pi(\mathscr A)'$ be the Radon--Nikodym derivative of \(\Gamma\) with respect to \(\Omega\). Since $ \Gamma\leq_{\mathrm{cp}}|\Phi|_\Omega$, the Arveson’s Radon--Nikodym theorem gives $   0\leq C\leq |D|$.

Because \(\Gamma\in\mathsf E(\Phi)\), the maps $\Gamma+\Phi$  and $\Gamma-\Phi$ are completely positive and so
\[
   0\leq_{\mathrm{cp}}\frac{\Gamma+\Phi}{2}\leq_{\mathrm{cp}}\Gamma
   \leq_{\mathrm{cp}}\Omega
\]
and
\[
   0\leq_{\mathrm{cp}}\frac{\Gamma-\Phi}{2}\leq_{\mathrm{cp}}\Gamma
   \leq_{\mathrm{cp}}\Omega.
\]
Hence Arveson's Radon--Nikodym theorem applies to both
\((\Gamma+\Phi)/2\) and \((\Gamma-\Phi)/2\) with respect to \(\Omega\). In the
fixed Stinespring representation of \(\Omega\), these maps are represented by
\[
   \frac{C+D}{2}
   \qquad\text{and}\qquad
   \frac{C-D}{2},
\]
respectively. By uniqueness of the Radon--Nikodym derivative, these operators
are positive. Thus
\[
   C+D\geq0,
   \qquad
   C-D\geq0,
\]
or equivalently
\[
   -C\leq D\leq C.
\]
Let \(E_D\) denote the spectral measure of \(D\), and put
\[
   p_+=E_D((0,\infty)),\qquad
   p_-=E_D((-\infty,0)),\qquad
   p_0=E_D(\{0\}).
\] These projections belong to
\(\pi(\mathscr A)'\) and satisfy
\[
   p_++p_-+p_0=I.
\]

On the positive spectral subspace of \(D\), we have
\[
   p_+Dp_+=p_+|D|p_+.
\]
Since \(C-D\geq0\) and \(C\leq |D|\), compression to \(p_+\) gives
\[
   p_+Cp_+\geq p_+Dp_+=p_+|D|p_+\qquad \text{and}\qquad p_+Cp_+\leq p_+|D|p_+.
\]
Hence
\[
   p_+Cp_+=p_+|D|p_+.
\]
Similarly, on the negative spectral subspace of \(D\), we obtain
\[
   p_-Cp_-=p_-|D|p_-.
\]
Moreover, it follows from  $0\leq p_0Cp_0\leq p_0|D|p_0=0$,  that $ p_0Cp_0=0$.
Consequently,
\[
   p_+Tp_+=p_-Tp_-=p_0Tp_0=0
\]
holds for positive operator $T:=|D|-C$.
Since \(T\geq 0\) and \(p_+Tp_+=0\), we have
\[
   (T^{1/2}p_+)^*(T^{1/2}p_+)=p_+Tp_+=0.
\]
Thus \(T^{1/2}p_+=0\), and hence \(Tp_+=0\). Taking adjoints gives
\(p_+T=0\).  Similarly \(Tp_-=Tp_0=0\).  Using  $p_++p_-+p_0=I$, we obtain $T=0$, meaning  $C=|D|$.
By uniqueness in Arveson's Radon--Nikodym theorem,
\[
   \Gamma=|\Phi|_\Omega.
\]
Therefore \(|\Phi|_\Omega\) is minimal in \(\mathsf E(\Phi)\).
\end{proof}


Theorem~\ref{thm-env-red}  gives a minimal envelope below any prescribed Wittstock
envelope. We now identify exactly when the prescribed envelope was already
minimal.

\begin{corollary}
\label{cor-min-char}
Let  $\Phi:\mathscr A\to B(\mathcal H)$ be Hermitian and completely bounded, and let $ \Omega\in\mathsf E(\Phi)$. Let $D_\Phi^\Omega\in\pi_\Omega(\mathscr A)'$ be the signed Radon--Nikodym derivative of \(\Phi\) with respect to $\Omega$, computed in a minimal Stinespring representation of \(\Omega\).
Then \(\Omega\) is minimal in \(\mathsf E(\Phi)\) if and only if  $|D_\Phi^\Omega|=I_{\mathcal K_\Omega}$.
\end{corollary}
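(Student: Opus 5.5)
Both implications come from Theorem~\ref{thm-env-red} together with the injectivity of the correspondence $T\mapsto\Omega_T$ on $\pi_\Omega(\mathscr A)'$, which is the uniqueness part of Arveson's Radon--Nikodym theorem. Fix a minimal Stinespring representation $\Omega(\cdot)=V^*\pi(\cdot)V$ and put $D:=D_\Phi^\Omega$.

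\emph{Minimal $\Rightarrow$ $|D|=I$.} By Theorem~\ref{thm-env-red}, $|\Phi|_\Omega\in\mathsf E(\Phi)$ and $|\Phi|_\Omega\leq_{\mathrm{cp}}\Omega$. Minimality of $\Omega$ forces $|\Phi|_\Omega=\Omega$. Thus
\[
V^*\pi(a)|D|V=V^*\pi(a)IV \qquad\text{for all } a\in\mathscr A,
\]
where both $|D|$ and $I$ are positive contractions in $\pi(\mathscr A)'$. Uniqueness of the Radon--Nikodym derivative gives $|D|=I$. If one wants uniqueness checked directly, it follows from minimality of the dilation: for $T\in\pi(\mathscr A)'$ with $V^*\pi(a)TV=0$ for all $a$, we get
\[
\langle T\pi(b)V\xi,\pi(a)V\eta\rangle=\langle V^*\pi(a^*b)TV\xi,\eta\rangle=0,
\]
and these vectors span a dense subspace of $\mathcal K_\Omega$, so $T=0$.

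\emph{$|D|=I$ $\Rightarrow$ minimal.} Let $\Gamma\in\mathsf E(\Phi)$ with $\Gamma\leq_{\mathrm{cp}}\Omega$. The minimality part of the proof of Theorem~\ref{thm-env-red} only used $\Gamma\leq_{\mathrm{cp}}\Omega$. From that hypothesis it derived $-C\leq D\leq C$ for $C=D_\Gamma^\Omega$, and from $C\leq|D|$ it derived $C=|D|$.

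Here I rerun that argument with $|D|=I$. Arveson's theorem gives $0\leq C\leq I=|D|$, so $C\leq|D|$ holds automatically. The spectral-compression argument then yields $C=|D|=I$, hence $\Gamma=\Omega$.

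An alternative for this direction is to note that $|D|=I$ means $D$ is a self-adjoint unitary, so $p_0=0$ and $D=p_+-p_-$. Compressing $-C\leq D\leq C\leq I$ by $p_\pm$ gives $p_\pm Cp_\pm=p_\pm$. Then $I-C\geq0$ has vanishing compressions to $p_+$ and $p_-$, and therefore vanishes.

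\emph{Main obstacle.} The only delicate point is making sure the earlier minimality argument truly needed only $\Gamma\leq_{\mathrm{cp}}\Omega$ together with $C\leq|D|$. I would write out the short compression argument explicitly rather than cite it, to avoid any gap.
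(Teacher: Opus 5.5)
Your proposal is correct and follows the paper's proof: the forward direction is the same (minimality of $\Omega$ forces $|\Phi|_\Omega=\Omega$, and uniqueness of the Radon--Nikodym derivative gives $|D_\Phi^\Omega|=I$; your direct check of uniqueness from minimality of the dilation is correct). For the converse, the paper simply observes that $|D_\Phi^\Omega|=I$ gives $|\Phi|_\Omega=\Omega$ and cites the minimality conclusion of Theorem~\ref{thm-env-red}, which is exactly what your rerun of the compression argument (or your self-adjoint-unitary variant) proves by hand, so you could shorten that step to a direct citation.
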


\begin{proof}
By Theorem~\ref{thm-env-red}, we have
\[
   |\Phi|_\Omega\in\mathsf E(\Phi),
   \qquad
   |\Phi|_\Omega\leq_{\mathrm{cp}}\Omega,
\]
and \(|\Phi|_\Omega\) is minimal.  If \(\Omega\) is minimal in \(\mathsf E(\Phi)\), then the inequality $   |\Phi|_\Omega\leq_{\mathrm{cp}}\Omega$ implies that  $|\Phi|_\Omega=\Omega$.
By the uniqueness part of Arveson's Radon--Nikodym theorem, this is equivalent
to $|D_\Phi^\Omega|=I_{\mathcal K_\Omega}$.

Conversely, if $|D_\Phi^\Omega|=I_{\mathcal K_\Omega}$, then  $|\Phi|_\Omega=\Omega$. However,  Theorem~\ref{thm-env-red} says that \(|\Phi|_\Omega\) is
minimal.
Finally, since \(D_\Phi^\Omega\) is self-adjoint, the condition
\(|D_\Phi^\Omega|=I_{\mathcal K_\Omega}\) is equivalent, by functional
calculus, to \((D_\Phi^\Omega)^2=I_{\mathcal K_\Omega}\).
\end{proof}

The next example shows that minimal Wittstock envelopes need not be unique.
Thus the reduction above is still relative to the initial choice of envelope.

\begin{example}
\label{ex-min-env}
Let \(\mathcal H=\mathbb C^2\), and consider the Hermitian map
\[
   \Phi:\mathbb C\to B(\mathcal H)=\mathbb M_2,
   \qquad
   \Phi(\lambda)=\lambda X,
\]
where
\[
   X=
   \begin{pmatrix}
      1 & 0\\
      0 & -1
   \end{pmatrix}.
\]
For each \(t\in\mathbb R\), put
\[
   Y_t=
   \begin{pmatrix}
      \cosh t & \sinh t\\
      \sinh t & \cosh t
   \end{pmatrix},
   \qquad
   \Omega_t(\lambda)=\lambda Y_t .
\]
Since \(Y_t\geq0\), the map \(\Omega_t\) is completely positive. Moreover,
\[
   Y_t+X=
   \begin{pmatrix}
      \cosh t+1 & \sinh t\\
      \sinh t & \cosh t-1
   \end{pmatrix}
   \quad\text{and}\quad
   Y_t-X=
   \begin{pmatrix}
      \cosh t-1 & \sinh t\\
      \sinh t & \cosh t+1
   \end{pmatrix}
\]
are positive semidefinite, because,  as $2\times 2$ Hermitian matrices, their diagonal entries are nonnegative and
their determinants are equal to  $\cosh^2 t-1-\sinh^2 t=0$.
Thus
\[
   -\Omega_t\leq_{\mathrm{cp}}\Phi\leq_{\mathrm{cp}}\Omega_t,
\]
and hence \(\Omega_t\in\mathsf E(\Phi)\).

We claim that each \(\Omega_t\) is minimal in \(\mathsf E(\Phi)\).
Since \(Y_t>0\), the following is a minimal Stinespring representation of
\(\Omega_t\):
\[
   \pi(\lambda)=\lambda I_{\mathcal H},
   \qquad
   V_t=Y_t^{1/2},
   \qquad
   \Omega_t(\lambda)=V_t^*\pi(\lambda)V_t .
\]
With respect to this representation, the signed Radon--Nikodym derivative of
\(\Phi\) with respect to \(\Omega_t\) is
\[
   D_\Phi^{\Omega_t}=Y_t^{-1/2}XY_t^{-1/2}.
\]
Since
\[
   Y_t^{-1}=
   \begin{pmatrix}
      \cosh t & -\sinh t\\
      -\sinh t & \cosh t
   \end{pmatrix}
   =
   XY_tX,
\]
we have
\[
   (D_\Phi^{\Omega_t})^2
   =
   Y_t^{-1/2}XY_t^{-1}XY_t^{-1/2}
   =
   Y_t^{-1/2}Y_tY_t^{-1/2}
   =
   I_{\mathcal H}.
\]
Hence $|D_\Phi^{\Omega_t}|=I_{\mathcal H}$.
By Corollary~\ref{cor-min-char}, \(\Omega_t\) is a
minimal element of \(\mathsf E(\Phi)\).

Finally, if \(s\neq t\), then \(Y_s\neq Y_t\), and hence $\Omega_s\neq\Omega_t$. Therefore \(\mathsf E(\Phi)\) contains infinitely many distinct minimal
envelopes.
\end{example}


\section{Least completely positive majorants}

 The previous section shows that every Wittstock envelope can be reduced to a
minimal one. We now ask a stronger local question: given
\(\Omega\in\mathsf E(\Phi)\), when do the Wittstock envelopes below \(\Omega\)
have a least element?

Throughout this section, if \(x=x^*\) is an element of a von Neumann algebra
\(\mathscr M\), we denote by \(E_x\) its spectral measure. Thus
\(E_x(\Delta)\in\mathscr M\) is the spectral projection of \(x\) associated with
a Borel set \(\Delta\subseteq\mathbb R\). For a projection \(p\in\mathscr M\),
we denote by \(c_{\mathscr M}(p)\) its central support in \(\mathscr M\), that is,
the smallest central projection \(z\in Z(\mathscr M)\) such that \(p\leq z\).
When the ambient von Neumann algebra is clear, we simply write \(c(p)\).

We first record the von Neumann algebra order fact needed below.
\begin{proposition}
\label{prop-least-maj}
Let $\mathscr M$ be a von Neumann algebra and let $x=x^*\in\mathscr M$. Define
\[
   \mathcal E(x)
   :=
   \{y\in\mathscr M_+:\ -y\leq x\leq y\}.
\]
Let
\[
   p_+ := E_x((0,\infty)),
   \qquad
   p_- := E_x((-\infty,0))
\]
be the positive and negative spectral projections of
\(x\), respectively.
 Then
$\mathcal E(x)$ has a least element if and only if the central supports of
$p_+$ and $p_-$ are orthogonal.

Equivalently, $\mathcal E(x)$ has a least element if and only if there exists a
central projection $z\in Z(\mathscr M)$ such that
\[
   zx\geq0,
   \qquad
   (1-z)x\leq0.
\]
In this case the least element is
\[
   |x|=zx-(1-z)x .
\]
\end{proposition}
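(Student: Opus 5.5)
We first prove that the two conditions are equivalent. If $c(p_+)c(p_-)=0$, put $z:=c(p_+)$. Then $p_+\le z$ and $p_-\le 1-z$. Since $x=xp_++xp_-$ with $xp_+=x_+$ and $xp_-=-x_-$, we get $zx=x_+\ge0$ and $(1-z)x=-x_-\le0$. Conversely, suppose $z$ is central with $zx\ge0$ and $(1-z)x\le0$. Then $zx=zx_+-zx_-$, and because $z$ commutes with $x$, the elements $zx_\pm$ are positive with $zx_+\cdot zx_-=0$. Uniqueness of the Jordan decomposition applied to the positive element $zx$ gives $zx_-=0$, hence $zp_-=0$, so $p_-\le 1-z$. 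Similarly $p_+\le z$. Therefore $c(p_+)\le z$ and $c(p_-)\le 1-z$, which are orthogonal. In this case $|x|=x_++x_-=zx-(1-z)x$.

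For sufficiency, we show $|x|$ is least in $\mathcal E(x)$. Clearly $|x|\in\mathcal E(x)$. Let $y\in\mathcal E(x)$. Since $z$ is central, compressing $y\ge x$ by $z$ gives $zyz=zy\ge zx=x_+$. Compressing $y\ge -x$ by $1-z$ gives $(1-z)y\ge -(1-z)x=x_-$. Adding, $y=zy+(1-z)y\ge x_++x_-=|x|$.

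For necessity, assume $\mathcal E(x)$ has a least element $m$. The key step is to show $m=|x|$. Since $|x|\in\mathcal E(x)$, we have $m\le|x|$. The argument in the proof of Theorem~\ref{thm-env-red} shows that any $y\in\mathcal E(x)$ with $y\le|x|$ equals $|x|$: compressing to $p_+$, $p_-$ and $p_0$ forces $|x|-y$ to vanish on each piece. Hence $m=|x|$, and so $|x|\le y$ for every $y\in\mathcal E(x)$. It remains to extract central orthogonality from this.

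This is the main obstacle, and I would argue by contradiction. Suppose $c(p_+)c(p_-)\ne0$. By the standard comparison fact, there exist nonzero subprojections $e\le p_+$ and $f\le p_-$ and a partial isometry $v\in\mathscr M$ with $v^*v=e$ and $vv^*=f$. Shrinking $e$ by spectral projections of $x$ (and transporting along $v$), we may assume $x\ge\delta e$ on $e$ and $x\le-\delta f$ on $f$ for some $\delta>0$. Now work in the $2\times2$ corner on $e+f$, identified with $e\mathscr Me\otimes\mathbb M_2$ via $v$. There $x=\mathrm{diag}(a,-b)$ with $a,b\ge\delta$. Imitating Example~\ref{ex-min-env}, perturb $|x|=\mathrm{diag}(a,b)$ by an off-diagonal term: set
\[
y=|x|-\bigl(e+f\bigr)|x|\bigl(e+f\bigr)+Y,
\]
where $Y$ is the operator $\begin{pmatrix}\alpha & \epsilon\\ \epsilon & \beta\end{pmatrix}$ in that corner, chosen with $\alpha,\beta$ multiples of $e$ (slightly enlarged from $a,b$ if needed, or after first reducing $a,b$ to scalars by further spectral cutting) so that $Y\pm\mathrm{diag}(a,-b)\ge0$. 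Then $y\in\mathcal E(x)$. We choose the parameters so that $|x|\le y$ fails, mirroring how the matrices $Y_t$ are not comparable with $I$. For example, we make $y$ minimal via a hyperbolic choice as in the Example, so that $y\ne|x|$ while minimality forbids $|x|\le y$.

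The delicate point is producing the comparable subprojections with $x$ essentially scalar on them, so the $2\times2$ computation from Example~\ref{ex-min-env} transfers. I would first handle this by cutting to spectral projections $E_x([\lambda,\lambda+\eta))$ and using that central supports of such pieces still meet.
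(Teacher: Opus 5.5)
Your arguments for the equivalence of the two conditions, for sufficiency (compressing $y\geq\pm x$ by $z$ and $1-z$), and for showing that a least element must equal $|x|$ are correct and agree with the paper. The gap is in the contradiction step. You assert that in the corner $r:=e+f$ one has $x=\mathrm{diag}(a,-b)$, and you conclude $y\in\mathcal E(x)$ from $Y\pm\mathrm{diag}(a,-b)\geq0$. That describes only the compression $rxr$. The subprojections supplied by the comparison theorem need not commute with $x$, and in general cannot be chosen to; cutting first to spectral pieces $E_x([\lambda,\lambda+\eta))$ does not change this. Write $y=|x|+W$ with $W:=Y-r|x|r\in r\mathscr Mr$. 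Then $y\pm x=(|x|\pm x)-r(|x|\pm x)r+(Y\pm rxr)$, and the difference $(|x|\pm x)-r(|x|\pm x)r$ has off-diagonal blocks between $r$ and $1-r$, so positivity of the corner does not give $y\pm x\geq0$.

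Here is a concrete failure. Take $\mathscr M=B(\mathbb C\oplus L^2[0,1])$ and $x=1\oplus(-m)$, with $m$ multiplication by $1+t$. Then $p_+$ has rank one, so $e=p_+$ and $f$ is the projection onto some $0\oplus\phi$. This $f$ cannot commute with $x$, since $m$ has no eigenvectors; note that $a=1$ and $b=\mu:=\langle m\phi,\phi\rangle$ are already scalars. Minimizing over the component orthogonal to $\phi$ shows that $y-x\geq0$ if and only if $W+\mathrm{diag}(0,2\sigma)\geq0$, where $\sigma:=\langle m^{-1}\phi,\phi\rangle^{-1}$. Your test $Y-rxr\geq0$ instead reads $W+\mathrm{diag}(0,2\mu)\geq0$, and $\sigma<\mu$ strictly by Cauchy--Schwarz. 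Hence the hyperbolic choice you propose, which makes $Y\pm rxr$ singular, gives $y\notin\mathcal E(x)$. The claimed minimality of $y$ in $\mathcal E(x)$ would also need its own proof, but it is not needed.

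What is missing is an estimate of $y\pm x$ that does not see the off-diagonal part of $x$. For any $e\leq E_x([\alpha,\infty))$ and $f\leq E_x((-\infty,-\beta])$, whether or not they commute with $x$, one has $|x|+x=2x_+\geq2\alpha e$ and $|x|-x=2x_-\geq2\beta f$. The paper proceeds in three moves:
\begin{itemize}
\item It first selects spectral projections $E_x([\alpha,\infty))$ and $E_x((-\infty,-\beta])$ with nonorthogonal central supports, using that central support preserves joins.
\item Only then does it apply comparison inside these projections. Your closing remark points toward this order, and it also repairs your earlier ``shrinking $e$ after comparison'', which is not legitimate when $e$ does not commute with $x$.
\item It perturbs additively: $y=|x|+s(e+f)+t(v+v^*)$ with $s<t$, $t^2\leq s(2\alpha+s)$ and $t^2\leq s(2\beta+s)$.
\end{itemize}
Then $y+x\geq(2\alpha+s)e+sf+t(v+v^*)\geq0$ and $y-x\geq se+(2\beta+s)f+t(v+v^*)\geq0$ reduce to scalar $2\times2$ matrices. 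Meanwhile $y-|x|$ corresponds to $\begin{pmatrix}s&t\\t&s\end{pmatrix}$, which has the negative eigenvalue $s-t$. In the example above $\beta=1\leq\sigma$, so the paper's bound lies on the safe side of the true constraint, whereas the compression value $\mu$ lies on the wrong side. This route needs only positive lower bounds $\alpha,\beta$, with no near-scalar cutting and no minimality argument.
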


\begin{proof}
We divide the proof into several steps.

\smallskip
\noindent\emph{Step 1. A central sign decomposition gives a least majorant.}
First suppose that there exists a central projection $z\in Z(\mathscr M)$ such
that
\[
   zx\geq0
   \qquad\text{and}\qquad
   (1-z)x\leq0.
\]
Set
\[
   y_0:=zx-(1-z)x .
\]
Then $y_0\geq0$ and  $-y_0\leq x\leq y_0$  and so  $y_0\in\mathcal E(x)$.

Let $y\in\mathcal E(x)$. Since $y\geq x$ and $z$ is a central projection
we have
\begin{align}\label{q-p1}
   zy=zyz\geq zxz=zx.
\end{align}
Similarly,  $y\geq -x$ implies that
\begin{align}\label{q-p2}
   (1-z)y=(1-z)y(1-z)\geq -(1-z)x(1-z)= -(1-z)x.
\end{align}
Adding both sides of  \eqref{q-p1} and \eqref{q-p2} yields
\[
   y\geq zx-(1-z)x=y_0 .
\]
Thus $y_0$ is the least element of $\mathcal E(x)$.  Moreover, since \(z\) is central, \(z\) commutes with \(x\), and
\[
   y_0^2
   =
   \bigl(zx-(1-z)x\bigr)^2
   =
   zx^2+(1-z)x^2
   =
   x^2.
\]
As \(y_0\geq0\), uniqueness of the positive square root gives
\(y_0=(x^2)^{1/2}=|x|\).

\smallskip
\noindent\emph{Step 2. Any least majorant must be \(|x|\).} Conversely, suppose that $\mathcal E(x)$ has a least element $y_0$, so that $ y_0\geq x$ and  $ y_0\geq -x$.
 On the other hand, since   $  |x|\in\mathcal E(x)$  we have   $y_0\leq |x|$.
 Compressing both sides of
\[
   x\leq y_0\leq |x|
\]
to the range of $p_+$ gives
\begin{align}\label{q-pos1}
   p_+y_0p_+=p_+xp_+=p_+|x|p_+.
\end{align}
Similarly, compressing both sides of
\[
   -x\leq y_0\leq |x|
\]
to the range of $p_-$ gives
\begin{align}\label{q-pos2}
   p_-y_0p_-=-p_-xp_-=p_-|x|p_-.
\end{align}
Also, if     $p_0=E_x(\{0\})$, then  $p_0|x|p_0=0$ and so the inequality   $0\leq y_0\leq |x|$  implies that
$   p_0y_0p_0=0$. We put $ T:=|x|-y_0$ so that   $T\geq0$. In addition,  \eqref{q-pos1} and \eqref{q-pos2} imply
\[
   p_+Tp_+=p_-Tp_-=p_0Tp_0=0.
\]
For a positive operator $T$ and a projection $p$, the equality $pTp=0$ implies
$Tp=pT=0$. Applying this to $p_+$, $p_-$, and $p_0$, and using $ p_++p_-+p_0=I$,
we obtain $T=0$. Hence $ y_0=|x|$. Thus, if a least element exists, it must be $|x|$.

\smallskip
\noindent\emph{Step 3. Nonorthogonal central supports prevent \(|x|\) from being least.}  For \(n\geq1\), put
\[
   e_n:=E_x([1/n,\infty))
   \qquad\text{and}\qquad
   f_n:=E_x((-\infty,-1/n]).
\]
Since
\[
   (0,\infty)=\bigcup_{n\geq1}[1/n,\infty)
   \qquad\text{and}\qquad
   (-\infty,0)=\bigcup_{n\geq1}(-\infty,-1/n],
\]
we have
\[
   p_+=\bigvee_{n\geq1}e_n
   \qquad\text{and}\qquad
   p_-=\bigvee_{n\geq1}f_n .
\]
Continuing under the assumption that \(\mathcal E(x)\) has a least element,
suppose  (to get a contradiction) that the central supports of
$p_+$ and $p_-$ are not orthogonal, say \(c(p_+)c(p_-)\neq0\).  Since central support preserves joins, the assumption
\(c(p_+)c(p_-)\neq0\) implies that
\(c(e_n)c(f_m)\neq0\) for some \(n,m\).  Putting
\(\alpha=1/n\) and \(\beta=1/m\), we obtain spectral projections
\[
   p=E_x([\alpha,\infty))
   \qquad\text{and}\qquad
   q=E_x((-\infty,-\beta])
\]
with nonorthogonal central supports. Hence, by the comparison theorem for projections in a von Neumann algebra \cite{KadisonR}, there exist nonzero subprojections  $e\leq p$ and  $f\leq q$ and a partial isometry $v\in\mathscr M$ such that  $ vv^*=e$ and $ v^*v=f$.
Choose a positive real number \(s\). Since \(\alpha,\beta>0\), we have
\[
   s^2<s(2\alpha+s),
   \qquad
   s^2<s(2\beta+s).
\]
Hence we may choose a real number \(t>s\), sufficiently close to \(s\), such
that
\[
   t^2\leq s(2\alpha+s),
   \qquad
   t^2\leq s(2\beta+s).
\]
Define
\begin{align}\label{def-vay}
   y:=|x|+s(e+f)+t(v+v^*) .
\end{align}
We claim that $y\in\mathcal E(x)$.
Since  $e\leq E_x([\alpha,\infty))$,  the spectral theorem gives
\[
   |x|+x=2x_+\geq 2\alpha e .
\]
Therefore
\begin{align}\label{vay}
   y+x
   =
   |x|+x+s(e+f)+t(v+v^*)
   \geq
   2\alpha e+s(e+f)+t(v+v^*).
\end{align}
Since \(e\leq p\), \(f\leq q\), and \(pq=0\), the projections \(e\) and \(f\)
are orthogonal. Together with \(v\), they form a system of \(2\times2\) matrix
units in the corner \((e+f)\mathscr M(e+f)\). Thus the linear span of
\(e,f,v,v^*\) is a \(^*\)-subalgebra with identity \(e+f\), naturally
isomorphic to \(\mathbb M_2\):
\begin{align}\label{mat-ide}
   e\mapsto
   \begin{pmatrix}1&0\\0&0\end{pmatrix},
   \quad
   f\mapsto
   \begin{pmatrix}0&0\\0&1\end{pmatrix},
   \quad
   v\mapsto
   \begin{pmatrix}0&1\\0&0\end{pmatrix}.
\end{align}
Under this identification, the element  $(2\alpha+s)e+sf+t(v+v^*)$ corresponds to the matrix
\[   \begin{pmatrix}
      2\alpha+s & t\\
      t & s
   \end{pmatrix},
\]
which is positive because $t^2\leq s(2\alpha+s)$.  Hence $y+x\geq0$ by \eqref{vay}.

Similarly, consider \(y-x\). Since $f\leq E_x((-\infty,-\beta])$, the spectral theorem gives
\begin{align}\label{abx-x}
   |x|-x=2x_-\geq 2\beta f .
\end{align}
It follows from \eqref{def-vay} and \eqref{abx-x} that
\begin{align}\label{vay-x}
   y-x
   =
   |x|-x+s(e+f)+t(v+v^*)
   \geq
   2\beta f+s(e+f)+t(v+v^*).
\end{align}
Under the  matrix-unit identification \eqref{mat-ide}, the element
\[
   2\beta f+s(e+f)+t(v+v^*)
   =
   se+(2\beta+s)f+t(v+v^*)
\]
corresponds to the scalar matrix
\[
   \begin{pmatrix}
      s & t\\
      t & 2\beta+s
   \end{pmatrix}.
\]
This matrix is positive because $t^2\leq s(2\beta+s)$. Hence
\[
   2\beta f+s(e+f)+t(v+v^*)\geq0,
\]
and consequently \(y-x\geq0\).
Thus
\[
   -y\leq x\leq y.
\]
Moreover, it follows from \eqref{vay} and \eqref{vay-x} that
\[
   y=\frac12\{(y+x)+(y-x)\}\geq0.
\]
Therefore $y\in\mathcal E(x)$.

However,
\[
   y-|x|=s(e+f)+t(v+v^*)
\]
corresponds, under the same matrix-unit identification, to
\[
   \begin{pmatrix}
      s & t\\
      t & s
   \end{pmatrix}.
\]
This matrix has eigenvalue \(s-t<0\). Hence \(y-|x|\) is not positive, and so
 $|x|\nleq y$.  This contradicts the fact that $|x|$ must be the least element of
$\mathcal E(x)$.  Therefore the central supports of $p_+$ and $p_-$ must be orthogonal.

\smallskip
\noindent\emph{Step 4. Orthogonal central supports give the required central projection.} If the central supports of $p_+$ and $p_-$ are orthogonal, then we put $z:=c(p_+)$, whence
\[
   zx\geq0,
   \qquad
   (1-z)x\leq0.
\]
Indeed, \(zp_-=0\) and \((1-z)p_+=0\). Hence \(zx\) is supported only on the
positive spectral part of \(x\), while \((1-z)x\) is supported only on the
negative spectral part and the zero spectral part.
This proves the equivalence with the central-projection formulation and
completes the proof.
\end{proof}

Proposition~\ref{prop-least-maj} concerns all positive majorants of a self-adjoint element. For the application to envelopes below a fixed Wittstock envelope, we also need an upper bound. We now present  the corresponding local version.

\begin{theorem}[Local least majorants in a von Neumann algebra]
\label{thm-loc-op}
Let \(\mathscr M\) be a von Neumann algebra, and let
\(d=d^*\in\mathscr M\) be a self-adjoint contraction.
Define
\[
   \mathcal C(d)
   :=
   \{c\in\mathscr M_+:\ 0\leq c\leq I,\ -c\leq d\leq c\}.
\]
Let \(E_d\) denote the spectral measure of \(d\), and put
\[
   p_+ := E_d((0,1)),
   \qquad
   p_- := E_d((-1,0)).
\]
Then \(\mathcal C(d)\) has a least element if and only if the central supports
of \(p_+\) and \(p_-\) in \(\mathscr M\) are orthogonal.
In this case the least element of \(\mathcal C(d)\) is \(|d|\).
\end{theorem}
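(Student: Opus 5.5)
The plan is to reduce to Proposition~\ref{prop-least-maj}, applied in a corner of \(\mathscr M\), after splitting off the spectral parts of \(d\) at \(\pm1\), where the upper constraint \(c\leq I\) leaves no freedom. Write
\[
q_1=E_d(\{1\}),\quad q_{-1}=E_d(\{-1\}),\quad p_0=E_d(\{0\}),\quad r=I-q_1-q_{-1}.
\]
All of these lie in \(\mathscr M\), and \(p_++p_-+p_0=r\).

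\emph{Step 1: any least element equals \(|d|\).} Clearly \(|d|\in\mathcal C(d)\), since \(0\leq|d|\leq I\). If \(c_0\) is least, then \(c_0\leq|d|\). The compression argument of Step~2 of Proposition~\ref{prop-least-maj} then applies verbatim to the spectral projections \(E_d((0,1])\), \(E_d([-1,0))\) and \(p_0\), and it gives \(c_0=|d|\).

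\emph{Step 2: sufficiency.} Assume \(c(p_+)c(p_-)=0\) and let \(c\in\mathcal C(d)\).
\begin{itemize}
\item From \(d\leq c\leq I\) we get \(q_1=q_1dq_1\leq q_1cq_1\leq q_1\), so \(q_1(I-c)q_1=0\).
\item Since \(I-c\geq0\), the positivity trick used in the paper gives \((I-c)q_1=0\), that is, \(cq_1=q_1=q_1c\).
\item The same argument with \(-d\leq c\leq I\) gives \(cq_{-1}=q_{-1}\).
\end{itemize}
Hence \(c\) commutes with \(r\) and \(c=q_1+q_{-1}+rcr\). Likewise \(|d|=q_1+q_{-1}+|rdr|\).

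Now \(rcr\) belongs to the positive majorant set of \(rdr\) inside the von Neumann algebra \(r\mathscr M r\). Central supports in \(r\mathscr M r\) are the compressions \(c_{\mathscr M}(\cdot)r\), so the central supports of \(p_+\) and \(p_-\) remain orthogonal there. These \(p_\pm\) are exactly the positive and negative spectral projections of \(rdr\). By Proposition~\ref{prop-least-maj}, \(rcr\geq|rdr|\), and therefore \(c\geq|d|\). Together with Step 1, this shows \(|d|\) is the least element.

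\emph{Step 3: necessity.} This is the main obstacle, since the perturbation must respect \(c\leq I\). Suppose \(c(p_+)c(p_-)\neq0\). Write
\[
p_+=\bigvee_n E_d([1/n,1-1/n]),\qquad p_-=\bigvee_n E_d([-1+1/n,-1/n]).
\]
Because central support preserves joins, there are \(\alpha,\beta,\gamma>0\) such that \(p=E_d([\alpha,1-\gamma])\) and \(q=E_d([-1+\gamma,-\beta])\) have nonorthogonal central supports. Exactly as in Step~3 of Proposition~\ref{prop-least-maj}, obtain nonzero \(e\leq p\), \(f\leq q\) and a partial isometry \(v\) with \(vv^*=e\), \(v^*v=f\). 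Set
\[
y=|d|+s(e+f)+t(v+v^*),\qquad 0<s<t.
\]

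\begin{itemize}
\item \emph{The two-sided bounds \(-y\leq d\leq y\).} These go through as in the proposition, with \(t\) close to \(s\) so that \(t^2\leq s(2\alpha+s)\) and \(t^2\leq s(2\beta+s)\).
\item \emph{The new requirement \(y\leq I\).} By the spectral theorem, \(I-|d|\geq\gamma(e+f)\). So it suffices that
\[
\begin{pmatrix}\gamma-s&-t\\-t&\gamma-s\end{pmatrix}\geq0
\]
in the \(2\times2\) matrix-unit picture, which holds when \(s+t\leq\gamma\). This is possible by taking \(s\) small and then \(t\in(s,\min(\gamma-s,\dots)]\).
\item \emph{The contradiction.} Now \(y\in\mathcal C(d)\), but \(y-|d|\) has the negative eigenvalue \(s-t\). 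Hence \(|d|\) is not least, contradicting Step 1.
\end{itemize}
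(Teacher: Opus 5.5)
Your proposal is correct. Steps~1 and~3 coincide with the paper's argument. The same compression onto $E_d((0,1])$, $E_d([-1,0))$, $E_d(\{0\})$ forces any least element to be $|d|$. The same perturbation $|d|+s(e+f)+t(v+v^*)$ is used, and your condition $s+t\leq\gamma$ plays the role of the paper's $t<\delta-s$ in securing $y\leq I$.

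The real difference is in Step~2. Both you and the paper first show that $c$ is forced to equal $I$ on the endpoint eigenspaces $E_d(\{\pm1\})$, which reduces everything to the compression by $r=I-E_d(\{1\})-E_d(\{-1\})$. From there the two arguments diverge:
\begin{itemize}
\item The paper argues by hand inside $\mathscr M$. It puts $z=c_{\mathscr M}(p_+)$, compresses $c-d\ge0$ by $zr$ and $c+d\ge0$ by $(I-z)r$, and adds the results.
\item You observe that $rcr$ lies in the majorant set $\mathcal E(rdr)$ of the corner $r\mathscr M r$, and then invoke Proposition~\ref{prop-least-maj} there.
\end{itemize}
Your route makes transparent that the local theorem is just the global proposition applied to the interior spectral part of $d$, once the constraint $c\le I$ has frozen the endpoints. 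It also avoids duplicating the compression argument. The paper's route is self-contained and never leaves $\mathscr M$.

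You also need less than the full identification of central supports in $r\mathscr M r$. For every central $z\in Z(\mathscr M)$, $zr$ is central in $r\mathscr M r$. Taking $z=c_{\mathscr M}(p_+)$ gives $zr\cdot rdr=zdp_+\ge0$ and $(r-zr)\,rdr=(I-z)dp_-\le0$. So the central-projection form of Proposition~\ref{prop-least-maj} applies directly, and only the trivial inclusion $Z(\mathscr M)r\subseteq Z(r\mathscr M r)$ is used.
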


\begin{proof}
We divide the proof into three steps.

\smallskip
\noindent\emph{Step 1. The possible least element is forced to be \(|d|\).}
First note that \(|d|\in\mathcal C(d)\). We claim that \(|d|\) is minimal in
\(\mathcal C(d)\). To see this, let $c\in\mathcal C(d)$ and $ c\leq |d|$ and  put
\[
   r_+ := E_d((0,\infty)),
   \qquad
   r_- := E_d((-\infty,0)),
   \qquad
   r_0 := E_d(\{0\}).
\]
Since \(c-d\geq0\) and \(c\leq |d|\), compression to \(r_+\) gives
\[
   r_+cr_+\geq r_+dr_+=r_+|d|r_+
   \qquad\text{and}\qquad
   r_+cr_+\leq r_+|d|r_+.
\]
Thus
\begin{align}\label{pos}
   r_+cr_+=r_+|d|r_+.
\end{align}
Similarly, from \(c+d\geq0\) and \(c\leq |d|\), compression to \(r_-\) gives
\begin{align}\label{neg}
   r_-cr_-=r_-|d|r_-.
\end{align}
Finally,
\[
   0\leq r_0cr_0\leq r_0|d|r_0=0,
\]
so  $r_0cr_0=0$.
It follows from \eqref{pos} and \eqref{neg} that
\[
   r_+Tr_+=r_-Tr_-=r_0Tr_0=0
\]
for positive  operator $ T:=|d|-c$.

For a positive operator \(T\) and a projection \(r\), the equality
\(rTr=0\) implies \(Tr=rT=0\). Applying this to
\(r_+,r_-,r_0\), and using
\[
   r_++r_-+r_0=I,
\]
we obtain \(T=0\). Hence \(c=|d|\) and this implies that  \(|d|\) is minimal in
\(\mathcal C(d)\). In particular, if \(\mathcal C(d)\) has a least element,
then that least element must be \(|d|\).

\smallskip
\noindent\emph{Step 2. Orthogonal central supports imply that \(|d|\) is least.}
Assume that the central supports of
\[
   p_+ = E_d((0,1)),
   \qquad
   p_- = E_d((-1,0))
\]
are orthogonal. Let \(c\in\mathcal C(d)\). We prove that  $ c\geq |d|$.  Assume that
\[
   q_+ := E_d(\{1\}),
   \qquad
   q_- := E_d(\{-1\})
\]
are   the endpoint spectral projections.  Since \(c\leq I\) and \(c-d\geq0\),
we have
\[
   q_+cq_+\geq q_+dq_+=q_+,
   \qquad\text{and}\qquad
   q_+cq_+\leq q_+.
\]
Hence  $q_+cq_+=q_+$. Since \(I-c\geq0\),  this implies that
\[
   (I-c)q_+=q_+(I-c)=0.
\]
Thus
\[
   cq_+=q_+c=q_+.
\]
Similarly, using \(c+d\geq0\) on \(q_-\), we obtain
\[
   cq_-=q_-c=q_-.
\]
Therefore \(q_+\) and \(q_-\) reduce \(c\), and on these endpoint spectral
subspaces \(c\) coincides with \(|d|\). We consider  $  h:=I-q_+-q_-$ so that \(h\) reduces both \(c\) and \(|d|\). We take $z:=c_{\mathscr M}(p_+)$.
By definition of central support, \(z\) is a central projection and
\(p_+\leq z\). Since \(c_{\mathscr M}(p_+)c_{\mathscr M}(p_-)=0\), we also have
\[
   zp_-=0,
   \qquad
   (I-z)p_+=0.
\]
 Since
\(hq_-=0\) and \(zp_-=0\), we have
\begin{align}\label{zhp}
   zh(q_-+p_-)=0.
\end{align}
But $q_-+p_-=E_d([-1,0))$ is The negative spectral projection of \(d\). Hence \eqref{zhp} implies that the compression of \(d\) to the projection \(zh\) has no negative
spectral part. Hence
\[
   zhdh\geq0
   \qquad\text{and}\qquad
   zh|d|h=zhdh .
\]
Since \(c-d\geq0\), compression by the projection \(zh\) gives
\begin{align}\label{h1}
   zhc h z\geq zhdhz=zh|d|hz.
\end{align}
Similarly, since
\(hq_+=0\) and \((I-z)p_+=0\), we have
\begin{align}\label{Izhq}
  (I-z)h(q_++p_+)=0.
\end{align}
But, \((q_++p_+)\) is the positive spectral projection of \(d\). Hence, \eqref{Izhq} implies that  \(r:=(I-z)h\) is supported on the nonpositive spectral part of \(d\). Since \(r\)
commutes with \(d\), it follows that
\[
   rdr\leq0
   \qquad\text{and}\qquad
   r|d|r=-rdr .
\]
As \(c+d\geq0\), compression by \(r\) gives
\[
   rcr\geq -rdr=r|d|r .
\]
That is,
\begin{align}\label{h2}
   (I-z)hch(I-z)\geq (I-z)h|d|h(I-z).
\end{align}
Summing both sides of \eqref{h1} and \eqref{h2} gives
\begin{align}\label{h3}
   hch\geq h|d|h .
\end{align}
Since \(q_+\) and \(q_-\) reduce \(c\), the projection
\(h=I-q_+-q_-\) also reduces \(c\). These projections also reduce \(|d|\),
because they are spectral projections of \(d\). Hence
\[
   c-|d|
   =
   q_+(c-|d|)q_+
   +
   q_-(c-|d|)q_-
   +
   h(c-|d|)h .
\]
The first two terms are zero, and the last one is positive by \eqref{h3}. Therefore \(c-|d|\geq0\), that is,
\(c\geq |d|\).
Since \(c\in\mathcal C(d)\) was arbitrary, \(|d|\) is the least element of
\(\mathcal C(d)\).

\smallskip
\noindent\emph{Step 3. Nonorthogonal central supports prevent \(|d|\) from being least.}
Conversely, suppose that the central supports of \(p_+\) and \(p_-\) are not
orthogonal. We show that \(|d|\) is not least in \(\mathcal C(d)\).

For \(n\geq2\), put
\[
   e_n:=E_d([1/n,\,1-1/n])
   \qquad\text{and}\qquad
   f_n:=E_d([-1+1/n,\,-1/n]).
\]
Since
\[
   (0,1)=\bigcup_{n\geq2}[1/n,\,1-1/n]
   \qquad\text{and}\qquad
   (-1,0)=\bigcup_{n\geq2}[-1+1/n,\,-1/n],
\]
we have
\[
   p_+=\bigvee_{n\geq2}e_n
   \qquad\text{and}\qquad
   p_-=\bigvee_{n\geq2}f_n.
\]
Since central support preserves joins, the assumption
\[
   c_{\mathscr M}(p_+)c_{\mathscr M}(p_-)\neq0
\]
implies that
\[
   c_{\mathscr M}(e_n)c_{\mathscr M}(f_m)\neq0
\]
for some \(n,m\geq2\). Put
\[
   \alpha:=1/n,
   \qquad
   \beta:=1/m,
   \qquad
   \delta:=\min\{1/n,1/m\}.
\]
Then the spectral projections  $p:=e_n$  and  $q:=f_m$ satisfy
\[
   p\leq E_d([\alpha,\,1-\delta])
   \qquad\text{and}\qquad
   q\leq E_d([-1+\delta,\,-\beta]),
\]
and have nonorthogonal central supports.

By the comparison theorem for projections in a von Neumann algebra
\cite{KadisonR}, there exist nonzero subprojections $e\leq p$ and $f\leq q$ and a partial isometry \(v\in\mathscr M\) such that
\[
   vv^*=e,
   \qquad
   v^*v=f.
\]
Choose a positive real number \(s\) such that \(0<s<\delta/2\). Since
\(\alpha,\beta>0\), we have
\[
s<\delta-s,\qquad
   s<\sqrt{s(2\alpha+s)},
   \qquad
   s<\sqrt{s(2\beta+s)}.
\]
Thus we may choose a real number \(t\) satisfying
\[
   s<t<
   \min\left\{
      \delta-s,\sqrt{s(2\alpha+s)},\sqrt{s(2\beta+s)}
   \right\}.
\]
Then
\[
   t<\delta-s,
   \qquad
   t^2\leq s(2\alpha+s),
   \qquad
   t^2\leq s(2\beta+s).
\]
Now consider the element
\[
   c:=|d|+s(e+f)+t(v+v^*).
\]
We claim that \(c\in\mathcal C(d)\).

First we prove \(c\leq I\). Since
\[
   e\leq E_d([\alpha,1-\delta])
   \qquad\text{and}\qquad
   f\leq E_d([-1+\delta,-\beta]),
\]
both \(e\) and \(f\) are supported where \(|d|\leq 1-\delta\). Hence
\begin{align}\label{pl1}
   e+f\leq E_{|d|}([0,1-\delta]).
\end{align}
Moreover, by functional calculus we have
\begin{align}\label{pl2}
I-|d|\geq \delta\,E_{|d|}([0,1-\delta]).
\end{align}
It follows from \eqref{pl1} and \eqref{pl2} that
\[
   I-|d|\geq \delta(e+f).
 \]
Together with definition of $c$, this concludes that
\[
   I-c
   =
   I-|d|-s(e+f)-t(v+v^*)
   \geq
   (\delta-s)(e+f)-t(v+v^*).
\]
Since \(e\leq p\), \(f\leq q\), and \(pq=0\), the projections \(e\) and \(f\)
are orthogonal. Together with \(v\), they form a system of \(2\times2\) matrix
units in the corner \((e+f)\mathscr M(e+f)\). Considering the corresponding
identification   with \(\mathbb M_2\), the element
\[
   (\delta-s)(e+f)-t(v+v^*)
\]
is equivalent to
\[
   \begin{pmatrix}
      \delta-s & -t\\
      -t & \delta-s
   \end{pmatrix}.
\]
This matrix is positive because \(t<\delta-s\). Hence $I-c\geq0$  or equivalently, $c\leq I$.

Next, since \(e\leq E_d([\alpha,1-\delta])\), the spectral theorem gives
\[
   |d|+d=2d_+\geq 2\alpha e.
\]
Together with definition of $c$, this implies that
\[
   c+d
   =
   |d|+d+s(e+f)+t(v+v^*)
   \geq
   2\alpha e+s(e+f)+t(v+v^*).
\]
Under the same matrix-unit identification, the element
\[
   2\alpha e+s(e+f)+t(v+v^*)
   =
   (2\alpha+s)e+sf+t(v+v^*)
\]
corresponds to
\[
   \begin{pmatrix}
      2\alpha+s & t\\
      t & s
   \end{pmatrix}.
\]
This matrix is positive because $ t^2\leq s(2\alpha+s)$. Hence $ c+d\geq0$.

Similarly, since \(f\leq E_d([-1+\delta,-\beta])\), the spectral theorem gives
\[
   |d|-d=2d_-\geq 2\beta f.
\]
Together with definition of $c$, this implies that
\[
   c-d
   =
   |d|-d+s(e+f)+t(v+v^*)
   \geq
   2\beta f+s(e+f)+t(v+v^*).
\]
Again, using  the same matrix-unit identification, the element
\[
   2\beta f+s(e+f)+t(v+v^*)
   =
   se+(2\beta+s)f+t(v+v^*)
\]
corresponds to the matrix
\[
   \begin{pmatrix}
      s & t\\
      t & 2\beta+s
   \end{pmatrix},
\]
which is  positive because $t^2\leq s(2\beta+s)$.  Hence $c-d\geq0$.
Consequently,
\[
   c\leq I,
   \qquad
   c+d\geq0,
   \qquad
   c-d\geq0.
\]
Furthermore,  we have
\[
   c=\frac12\{(c+d)+(c-d)\}\geq0.
\]
Hence  $c\in\mathcal C(d)$. However,
\[
   c-|d|=s(e+f)+t(v+v^*)
\]
corresponds, under the same matrix-unit identification, to
\[
   \begin{pmatrix}
      s & t\\
      t & s
   \end{pmatrix}.
\]
This matrix has eigenvalue \(s-t<0\). Hence \(c-|d|\) is not positive, and so
\[
   |d|\nleq c.
\]
Thus \(|d|\) is not a least element of \(\mathcal C(d)\). By Step 1, any least
element would have to be \(|d|\). Therefore \(\mathcal C(d)\) has no least
element.

This completes the proof.
\end{proof}


We now return from the operator-level order problem to completely positive
majorants. Theorem~\ref{thm-loc-op} applies to the Radon--Nikodym derivative of
\(\Phi\) inside a fixed Wittstock envelope, and gives a criterion for the
existence of a least envelope below that fixed envelope.

 For a fixed Wittstock envelope \(\Omega\in\mathsf E(\Phi)\), we write
\[
   \mathsf E_\Omega(\Phi)
   :=
   \{\Gamma\in\mathsf E(\Phi):\ \Gamma\leq_{\mathrm{cp}}\Omega\}.
\]
Thus \(\mathsf E_\Omega(\Phi)\) is the order interval of Wittstock envelopes of
\(\Phi\) lying below the prescribed envelope \(\Omega\).

\begin{theorem}[Least envelopes below a fixed Wittstock envelope]
\label{thm-loc-env}
Let  $\Phi:\mathscr A\to B(\mathcal H)$ be Hermitian and completely bounded, and let
$ \Omega\in\mathsf E(\Phi)$.  Let $\Omega(\cdot)=V_\Omega^*\pi_\Omega(\cdot)V_\Omega$
be a minimal Stinespring representation of \(\Omega\), and let
\[
   D_\Phi^\Omega\in\pi_\Omega(\mathscr A)'
\]
be the signed Radon--Nikodym derivative of \(\Phi\) with respect to
\(\Omega\). Let \(E_{D_\Phi^\Omega}\) denote the spectral measure of
\(D_\Phi^\Omega\) in the von Neumann algebra \(\pi_\Omega(\mathscr A)'\), and put
\[
   p_+ := E_{D_\Phi^\Omega}((0,1)),
   \qquad
   p_- := E_{D_\Phi^\Omega}((-1,0)).
\]
Then \(\mathsf E_\Omega(\Phi)\) has a least element if and only if
the central supports of \(p_+\) and \(p_-\) in
\(\pi_\Omega(\mathscr A)'\) are orthogonal. In this case the least element of
\(\mathsf E_\Omega(\Phi)\) is the \(\Omega\)-relative modulus $|\Phi|_\Omega$.
\end{theorem}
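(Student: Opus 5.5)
The plan is to transfer Theorem~\ref{thm-loc-op} through the Radon--Nikodym correspondence. Write $\Omega(\cdot)=V^*\pi(\cdot)V$ minimally, $\mathscr M:=\pi(\mathscr A)'$, and $D:=D_\Phi^\Omega$, a self-adjoint contraction in $\mathscr M$. The key step is to show that the map
\[
   c\longmapsto \Omega_c,\qquad \Omega_c(a)=V^*\pi(a)cV,
\]
is an order isomorphism from $\mathcal C(D)$, as defined in Theorem~\ref{thm-loc-op} with $\mathscr M=\pi(\mathscr A)'$, onto $\mathsf E_\Omega(\Phi)$. Here the order on $\mathcal C(D)$ is the operator order, and the order on $\mathsf E_\Omega(\Phi)$ is $\leq_{\mathrm{cp}}$. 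Once this is established, least elements correspond. Theorem~\ref{thm-loc-op} then gives existence if and only if $c(p_+)c(p_-)=0$, with least element $|D|$, and $\Omega_{|D|}=|\Phi|_\Omega$ by definition.

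\emph{Well-definedness.} For $c\in\mathcal C(D)$, the maps $\Omega_c$, $\Omega-\Omega_c=\Omega_{I-c}$ and $\Omega_c\pm\Phi=\Omega_{c\pm D}$ are completely positive, since the operators $c$, $I-c$ and $c\pm D$ are positive elements of the commutant. Hence $\Omega_c\in\mathsf E_\Omega(\Phi)$.

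\emph{Surjectivity.} Given $\Gamma\in\mathsf E_\Omega(\Phi)$, Arveson's theorem produces $c:=D_\Gamma^\Omega$ with $0\le c\le I$. Exactly as in the proof of Theorem~\ref{thm-env-red}, the maps $(\Gamma\pm\Phi)/2$ are completely positive and dominated by $\Gamma\leq_{\mathrm{cp}}\Omega$. They are represented by $(c\pm D)/2$, and uniqueness of the Radon--Nikodym derivative forces these operators to be positive. Thus $-c\le D\le c$, so $c\in\mathcal C(D)$ and $\Gamma=\Omega_c$.

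\emph{Injectivity and order reflection.} This is the main point, and it needs the converse of the monotonicity stated in the preliminaries: if $\Omega_S\leq_{\mathrm{cp}}\Omega_T$ for $S,T\in\mathcal C(D)$, then $S\le T$. I will argue as follows. The map $\Omega_T-\Omega_S$ is completely positive and dominated by $\Omega_T\leq_{\mathrm{cp}}\Omega$, so it has a positive Radon--Nikodym derivative $R\in\mathscr M$. Then $\Omega_R=\Omega_{T-S}$. Since $T-S$ is self-adjoint but not a priori positive, I will use the uniqueness of the operator representing a map in the fixed minimal dilation. Concretely, if $X\in\mathscr M$ satisfies $V^*\pi(a)XV=0$ for all $a$, then
\[
   \langle X\pi(b)V\xi,\pi(a)V\eta\rangle=\langle V^*\pi(a^*b)XV\xi,\eta\rangle=0
\]
for all $a,b,\xi,\eta$, using that $X$ commutes with $\pi(b)$. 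By minimality of the dilation this gives $X=0$. Applying this to $X=R-(T-S)$ yields $T-S=R\ge0$. The same uniqueness gives injectivity, and the forward direction of the order correspondence is the stated monotonicity.

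I expect this faithfulness (uniqueness) argument to be the only subtle step; everything else is bookkeeping against Theorem~\ref{thm-loc-op}.
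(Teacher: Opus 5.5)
Your proposal is correct and follows essentially the same route as the paper: identify $\mathsf E_\Omega(\Phi)$ with $\mathcal C(D_\Phi^\Omega)$ through the Radon--Nikodym correspondence $c\mapsto\Omega_c$, then invoke Theorem~\ref{thm-loc-op}, with the least element $|D_\Phi^\Omega|$ corresponding to $|\Phi|_\Omega$. The one thing you add is an explicit proof that $X\mapsto V^*\pi(\cdot)XV$ is injective on $\pi(\mathscr A)'$ for a minimal dilation; this justifies the order reflection, and also the positivity of $(c\pm D)/2$, which the paper simply attributes to Arveson's theorem.
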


\begin{proof}
We use  $D:=D_\Phi^\Omega$ and $\mathscr M_\Omega:=\pi_\Omega(\mathscr A)'$ for simplicity of notations.  Since \(\Omega\in\mathsf E(\Phi)\), the maps \(\Omega+\Phi\) and \(\Omega-\Phi\) are completely positive. Hence, in terms of the Radon--Nikodym derivative \(D\), we have \[ I+D\geq0, \qquad I-D\geq0. \] Thus \(D=D^*\) and \(-I\leq D\leq I\).
By Arveson's Radon--Nikodym theorem, the order interval $[0,\Omega]
   :=
   \{\Gamma:\ 0\leq_{\mathrm{cp}}\Gamma\leq_{\mathrm{cp}}\Omega\}$
is order isomorphic to the set of positive contractions in
\(\mathscr M_\Omega\). More precisely, a completely positive map
\(\Gamma\leq_{\mathrm{cp}}\Omega\) corresponds to a unique operator
\[
   C:=D_\Gamma^\Omega\in\mathscr M_\Omega,
   \qquad
   0\leq C\leq I,
\]
such that
\[
   \Gamma(a)=V_\Omega^*\pi_\Omega(a)CV_\Omega,
   \qquad a\in\mathscr A .
\]
By definition, a completely positive map \(\Gamma\leq_{\mathrm{cp}}\Omega\)
belongs to \(\mathsf E_\Omega(\Phi)\) precisely when
\[
   \Gamma+\Phi\geq_{\mathrm{cp}}0
   \qquad\text{and}\qquad
   \Gamma-\Phi\geq_{\mathrm{cp}}0.
\]
Since
\[
   (\Gamma+\Phi)(a)
   =
   V_\Omega^*\pi_\Omega(a)(C+D)V_\Omega
   \qquad\text{and}\qquad
   (\Gamma-\Phi)(a)
   =
   V_\Omega^*\pi_\Omega(a)(C-D)V_\Omega,
\]
the Radon--Nikodym theorem identifies complete positivity of these maps
with positivity of \(C+D\) and \(C-D\), respectively.
Equivalently,
\[
   -C\leq D\leq C.
\]

Therefore \(\mathsf E_\Omega(\Phi)\) is order isomorphic to
\[
   \mathcal C(D)
   :=
   \{C\in\mathscr M_{\Omega,+}:\ 0\leq C\leq I,\ -C\leq D\leq C\}.
\]
By Theorem~\ref{thm-loc-op}, this ordered set has a
least element if and only if the spectral projections \[ E_D((0,1)) \qquad\text{and}\qquad E_D((-1,0)) \] have orthogonal central supports in \(\mathscr M_\Omega\). Since \(D=D_\Phi^\Omega\), this is precisely the condition
stated in the theorem.

Moreover, the least element of \(\mathcal C(D)\) is \(|D|\). The corresponding  completely positive map
via  Radon--Nikodym order isomorphism is
\[
   a\longmapsto V_\Omega^*\pi_\Omega(a)|D|V_\Omega,
   \qquad a\in\mathscr A.
\]
By definition, this map is the \(\Omega\)-relative modulus $ |\Phi|_\Omega$.
\end{proof}


In finite dimensions, the criterion in Theorem~\ref{thm-loc-env} gives a
particularly concrete form through the Choi representation.


\begin{corollary}
\label{cor-choi-maj}
Let $\Phi:\mathbb M_m\to B(\mathcal H)$ be Hermitian, and let \(C_\Phi\) be its Choi matrix. Then
\(\mathsf E(\Phi)\) has a least element if and only if
\[
   C_\Phi\geq0
   \qquad\text{or}\qquad
   C_\Phi\leq0.
\]
Equivalently, \(\mathsf E(\Phi)\) has a least element if and only if
\(\Phi\) is completely positive or \(-\Phi\) is completely positive.

In this case the least element is \(\Phi\) when \(\Phi\) is completely
positive, and it is \(-\Phi\) when \(-\Phi\) is completely positive.
Consequently, if \(\Phi\) is neither completely positive nor the negative of a
completely positive map, then \(\mathsf E(\Phi)\) has no least element. In particular, this shows that a general Hermitian completely bounded map admits
no intrinsic completely positive modulus.
\end{corollary}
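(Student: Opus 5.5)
The plan is to transfer everything to Choi matrices, where the problem becomes Proposition~\ref{prop-least-maj} for the von Neumann algebra $\mathscr M=B(\mathcal H^{\oplus m})$, whose center is trivial. By the Choi criterion recalled in the preliminaries, the map $\Gamma\mapsto C_\Gamma$ is an order isomorphism between Hermitian maps $\mathbb M_m\to B(\mathcal H)$ with $\leq_{\mathrm{cp}}$ and self-adjoint elements of $\mathbb M_m(B(\mathcal H))$ with the usual order. It is a linear bijection, since $\Gamma$ is determined by its values on the matrix units, and every operator matrix arises as $C_\Gamma$ for the map $E_{ij}\mapsto X_{ij}$, which is Hermitian exactly when the matrix is self-adjoint. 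Under this bijection, $\Gamma\in\mathsf E(\Phi)$ if and only if $C_\Gamma\geq0$ and $-C_\Gamma\leq C_\Phi\leq C_\Gamma$. Hence $\mathsf E(\Phi)$ is order isomorphic to $\mathcal E(C_\Phi)$ in the notation of Proposition~\ref{prop-least-maj}, taken with $\mathscr M=B(\mathcal H^{\oplus m})$. Complete boundedness is automatic here and plays no role.

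By Proposition~\ref{prop-least-maj}, $\mathcal E(C_\Phi)$ has a least element if and only if there is a central projection $z$ with $zC_\Phi\geq0$ and $(1-z)C_\Phi\leq0$. Since $B(\mathcal H^{\oplus m})$ is a factor, $z\in\{0,I\}$. The choice $z=I$ gives $C_\Phi\geq0$, and $z=0$ gives $C_\Phi\leq0$, which proves the first equivalence. An alternative is the central-support formulation: $c(p_+)c(p_-)=0$ forces $p_+=0$ or $p_-=0$, because in a factor the central support of a nonzero projection is $I$.

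For the identification of the least element, Proposition~\ref{prop-least-maj} says it is $|C_\Phi|$. This equals $C_\Phi$ when $C_\Phi\geq0$ and $-C_\Phi$ when $C_\Phi\leq0$. Pulling back through the Choi bijection gives $\Phi$ or $-\Phi$, respectively. By the Choi criterion, $C_\Phi\geq0$ and $C_\Phi\leq0$ are exactly the statements that $\Phi$ or $-\Phi$ is completely positive. The remaining assertions of the corollary are restatements of these facts.

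The only step needing care is checking that the Choi correspondence really is an order isomorphism onto \emph{all} of $\mathcal E(C_\Phi)$. Concretely, every $y\in\mathcal E(C_\Phi)$ must be the Choi matrix of a completely positive map lying in $\mathsf E(\Phi)$. This follows from surjectivity of the bijection together with the Choi criterion, so I expect no real obstacle. I would use the global Proposition~\ref{prop-least-maj} rather than the local Theorem~\ref{thm-loc-env}, because the latter only concerns envelopes below a fixed $\Omega$.
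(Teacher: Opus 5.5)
Your proposal is correct and follows essentially the same route as the paper: you identify $\mathsf E(\Phi)$ with $\mathcal E(C_\Phi)$ through the Choi order isomorphism, apply Proposition~\ref{prop-least-maj} in the factor $B(\mathcal H^{\oplus m})$, and read off the least element $|C_\Phi|=\pm C_\Phi$. The only differences are cosmetic: you lead with the central-projection formulation ($z\in\{0,I\}$) rather than the paper's central-support formulation, and you spell out the surjectivity of the Choi correspondence, which the paper leaves implicit.
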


\begin{proof}
Under the Choi--Jami{\l}kowski isomorphism, the completely positive order on
maps \(\mathbb M_m\to B(\mathcal H)\) is identified with the L\"{o}wner order
on \(\mathbb M_m(B(\mathcal H))\). Hence \(\mathsf E(\Phi)\) is order
isomorphic to
\[
   \{Y\in\mathbb M_m(B(\mathcal H))_+:\ -Y\leq C_\Phi\leq Y\}.
\]
By Proposition~\ref{prop-least-maj}, this ordered set has a least element if
and only if the central supports of the spectral projections
\begin{align}\label{sp-pro}
   E_{C_\Phi}((0,\infty))
   \qquad\text{and}\qquad
   E_{C_\Phi}((-\infty,0))
\end{align}
are orthogonal. However, the von-Neumann algebra
\[
   \mathbb M_m(B(\mathcal H))\cong B(\mathcal H^{\oplus m})
\]
is a factor, so its only central projections are \(0\) and \(I\). Hence the
central support of a projection is \(0\) if the projection is zero, and is
\(I\) otherwise. Therefore the two central supports of \eqref{sp-pro} are orthogonal if
and only if at least one of these two spectral projections is zero. This is
equivalent to
\[
   C_\Phi\geq0
   \qquad\text{or}\qquad
   C_\Phi\leq0.
\]
By Choi's theorem, these   are equivalent to \(\Phi\) being
completely positive or \(-\Phi\) being completely positive, respectively.

If \(C_\Phi\geq0\), the least majorant is \(C_\Phi\), which corresponds to
\(\Phi\). If \(C_\Phi\leq0\), the least majorant is \(-C_\Phi\), which
corresponds to \(-\Phi\). This proves the assertion.
\end{proof}

\begin{remark}
Corollary~\ref{cor-choi-maj} should be read together with
Theorem~\ref{thm-env-red}. Minimal Wittstock envelopes always exist:
 for every \(\Omega\in\mathsf E(\Phi)\), the relative modulus $|\Phi|_\Omega$
is a minimal element of \(\mathsf E(\Phi)\). However, such minimal envelopes
need not be unique and hence need not be comparable. Thus minimality alone does
not produce an intrinsic completely positive modulus depending only on
\(\Phi\).

\end{remark}


\section{Envelope-relative perspectives}

The previous section shows that a Hermitian completely bounded map need not
have an intrinsic completely positive modulus. Thus any perspective construction
which uses such a modulus must be formulated relative to a chosen Wittstock
envelope. In this section we develop that envelope-relative construction.

We use Okayasu's completely positive map perspective. For a scalar function
\(g\) in the admissible class considered in \cite{Ok}, we write
\[
   P_g(\Theta_1,\Theta_2)
\]
for the perspective of completely positive maps
\(\Theta_1,\Theta_2:\mathscr A\to B(\mathcal H)\). Throughout this section,
\(g\) is fixed so that this perspective is well defined.

Let $   \Phi,\Psi:\mathscr A\to B(\mathcal H)$ be Hermitian completely bounded maps, and choose Wittstock envelopes $\Omega\in\mathsf E(\Phi)$ and $ \Lambda\in\mathsf E(\Psi)$.
We write
\[
   \Phi_{\Omega,+},\ \Phi_{\Omega,-}
   \qquad\text{and}\qquad
   \Psi_{\Lambda,+},\ \Psi_{\Lambda,-}
\]
for the corresponding envelope-relative positive and negative parts.

\begin{definition}
Let $\varepsilon,\eta\in\{+,-\}$.
The \emph{\((\Omega,\Lambda)\)-relative sector perspective} of \(\Phi\) and
\(\Psi\) is defined by
\[
    P_g^{\Omega,\Lambda;\varepsilon,\eta}(\Phi,\Psi)
   :=
    P_g(\Phi_{\Omega,\varepsilon},\Psi_{\Lambda,\eta}).
\]
Thus the choice of envelopes \(\Omega\) and \(\Lambda\) determines four
completely positive perspective components, corresponding to the four choices
of signs \((\varepsilon,\eta)\).

If \(\Omega\) and \(\Lambda\) are minimal Wittstock envelopes, we call these
the \emph{minimal-envelope sector perspectives} associated with the chosen
minimal envelopes.
\end{definition}


\begin{definition}
With the same notation, we define the \emph{$(\Omega,\Lambda)$-relative absolute
perspective} of $\Phi$ and $\Psi$ by
\[
      P_g^{\Omega,\Lambda;\mathrm{abs}}(\Phi,\Psi)
   :=
  P_g(|\Phi|_\Omega,|\Psi|_\Lambda).
\]
\end{definition}

\begin{remark}
We do not use an envelope-free notation such as
\[
   P_g(\Phi,\Psi)
\]
for Hermitian completely bounded maps. In general, the maps
\[
   \Phi_{\Omega,+},\quad \Phi_{\Omega,-},\quad |\Phi|_\Omega
\]
depend on the chosen Wittstock envelope \(\Omega\). Consequently, both the
sector perspectives and the absolute perspective depend on the chosen
envelopes. This dependence is unavoidable in general, since a Hermitian completely bounded
map need not have a least completely positive majorant.
\end{remark}


\begin{proposition}
\label{pr-red-invar}
Let $\Phi,\Psi:\mathscr A\to B(\mathcal H)$ be Hermitian completely bounded maps, and let
\[
   \Omega\in\mathsf E(\Phi),
   \qquad
   \Lambda\in\mathsf E(\Psi).
\]
Then, for every \(\varepsilon,\eta\in\{+,-\}\),
\[
   P_g^{\Omega,\Lambda;\varepsilon,\eta}(\Phi,\Psi)
   =
   P_g^{|\Phi|_\Omega,|\Psi|_\Lambda;\varepsilon,\eta}(\Phi,\Psi)
\quad\text{and}\quad
   P_g^{\Omega,\Lambda;\mathrm{abs}}(\Phi,\Psi)
   =
   P_g^{|\Phi|_\Omega,|\Psi|_\Lambda;\mathrm{abs}}(\Phi,\Psi).
\]
\end{proposition}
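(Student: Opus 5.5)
It suffices to prove a single-map statement: for \(\Omega\in\mathsf E(\Phi)\) and \(\Omega':=|\Phi|_\Omega\),
\[
   \Phi_{\Omega',+}=\Phi_{\Omega,+},\qquad
   \Phi_{\Omega',-}=\Phi_{\Omega,-},\qquad
   |\Phi|_{\Omega'}=|\Phi|_\Omega .
\]
Applying the same statement to \(\Psi\) and \(\Lambda\), and substituting into the definitions of the sector and absolute perspectives, gives both identities at once, since \(P_g\) only sees its completely positive arguments. By Theorem~\ref{thm-env-red}, \(\Omega'\in\mathsf E(\Phi)\), so the right-hand sides of the proposition are well defined.

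To compute the relative parts with respect to \(\Omega'\), I would first build a minimal Stinespring representation of \(\Omega'\) inside that of \(\Omega\). Let \(\Omega=V^*\pi(\cdot)V\) be minimal, set \(D=D_\Phi^\Omega\), and let \(q=s(|D|)=E_D(\mathbb R\setminus\{0\})\in\pi(\mathscr A)'\). Since \(|D|^{1/2}\) commutes with \(\pi\),
\[
   \Omega'(a)=V^*\pi(a)|D|V=(|D|^{1/2}V)^*\pi(a)(|D|^{1/2}V).
\]
The subspace \(\mathcal K'=\overline{\operatorname{span}}\{\pi(a)|D|^{1/2}V\xi\}\) equals \(\overline{|D|^{1/2}\mathcal K}=q\mathcal K\), using minimality of \(\Omega\) and commutation. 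So \((\pi|_{q\mathcal K},\,q\mathcal K,\,|D|^{1/2}V)\) is a minimal Stinespring triple for \(\Omega'\). Its commutant is \(q\pi(\mathscr A)'q\).

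The key step is to identify the signed derivative there: I claim \(D_\Phi^{\Omega'}=u:=\operatorname{sgn}(D)q=E_D((0,\infty))-E_D((-\infty,0))\), a self-adjoint unitary on \(q\mathcal K\). Indeed,
\[
   (|D|^{1/2}V)^*\pi(a)\,u\,|D|^{1/2}V=V^*\pi(a)\,u|D|\,V=V^*\pi(a)DV=\Phi(a),
\]
and uniqueness of the signed derivative identifies \(u\). Then \(u_\pm=E_D((0,\infty))\), \(E_D((-\infty,0))\), and \(|u|=q\). Hence
\[
   \Phi_{\Omega',+}(a)=V^*|D|^{1/2}\pi(a)E_D((0,\infty))|D|^{1/2}V=V^*\pi(a)D_+V=\Phi_{\Omega,+}(a),
\]
and similarly for the minus part and the modulus. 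The last uses \(q|D|=|D|\).

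The main obstacle is the minimality and identification of \(\mathcal K'\) with \(q\mathcal K\). The closure \(\overline{|D|^{1/2}\mathcal K}\) equals the range of the support projection \(q\), and we need the commutant of the restricted representation to be \(q\pi(\mathscr A)'q\) with the same functional calculus. This matters because the relative parts are defined only up to the canonical unitary equivalence of minimal Stinespring representations. As a consistency check, Corollary~\ref{cor-min-char} predicts \(|D_\Phi^{\Omega'}|=I\), which matches \(|u|=q=I_{\mathcal K'}\).
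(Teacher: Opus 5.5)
Your proof is correct, but it takes a different route from the paper. The paper never builds a dilation of $\Omega'=|\Phi|_\Omega$. It applies Theorem~\ref{thm-env-red} to $\Omega'$ to get $|\Phi|_{\Omega'}\in\mathsf E(\Phi)$ with $|\Phi|_{\Omega'}\leq_{\mathrm{cp}}\Omega'$. Since $\Omega'$ is itself minimal by the same theorem, this forces $|\Phi|_{\Omega'}=\Omega'=|\Phi|_\Omega$. The sector parts then follow from the identity $\Phi_{\Omega,\pm}=\tfrac12(|\Phi|_\Omega\pm\Phi)$, which is just $(D)_\pm=\tfrac12(|D|\pm D)$ pushed through the dilation.

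You instead realize a minimal Stinespring triple for $\Omega'$ as the compression of $\pi$ to $s(|D|)\mathcal K$ with $|D|^{1/2}V$. You then identify $D_\Phi^{\Omega'}$ explicitly as the symmetry $\operatorname{sgn}(D)$ there, and compute each relative part directly.

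The comparison is as follows:
\begin{itemize}
\item The paper's argument is shorter, but it leans on the minimality part of Theorem~\ref{thm-env-red}.
\item Yours is self-contained: it uses only uniqueness of the signed derivative, which follows at once from minimality of the triple. It also yields more: a concrete minimal dilation of $|\Phi|_\Omega$, and the fact that $|D_\Phi^{|\Phi|_\Omega}|=I$, in line with Corollary~\ref{cor-min-char}.
\item Once you have $|\Phi|_{\Omega'}=|\Phi|_\Omega$, you could read off the sector identities from $\Phi_{\Omega,\pm}=\tfrac12(|\Phi|_\Omega\pm\Phi)$ instead of computing them separately.
\end{itemize}

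The point you flag as the main obstacle is routine. You do not need the full identification of the commutant with $q\pi(\mathscr A)'q$. It suffices that $\operatorname{sgn}(D)\in\pi(\mathscr A)'$ is reduced by $q$, together with uniqueness of the signed derivative in the minimal triple $(\pi|_{q\mathcal K},q\mathcal K,|D|^{1/2}V)$. Moreover, the relative parts are genuinely independent of the choice of minimal dilation, since unitary conjugation commutes with the functional calculus. So working in your particular triple costs nothing.
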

\begin{proof}
Put $\Omega':=|\Phi|_\Omega$. By Theorem~\ref{thm-env-red}, \(\Omega'\) is a minimal element of
\(\mathsf E(\Phi)\). On the other hand, the relative modulus
\(|\Phi|_{\Omega'}\) is again a Wittstock envelope of \(\Phi\), and it satisfies
\[
   |\Phi|_{\Omega'}\leq_{\mathrm{cp}}\Omega'.
\]
By the minimality of \(\Omega'\), it follows that
\[
   |\Phi|_{\Omega'}=\Omega'=|\Phi|_\Omega .
\]
Since the relative positive and negative parts are defined by
\[
   \Phi_{\Omega,+}:=\frac{|\Phi|_\Omega+\Phi}{2},
   \qquad
   \Phi_{\Omega,-}:=\frac{|\Phi|_\Omega-\Phi}{2},
\]
we get
\[
   \Phi_{\Omega,+}
   =
   \frac{|\Phi|_\Omega+\Phi}{2}
   =
   \frac{|\Phi|_{\Omega'}+\Phi}{2}
   =
   \Phi_{\Omega',+},
\]
and similarly
\[
   \Phi_{\Omega,-}=\Phi_{\Omega',-}.
\]
That is,
\[
   \Phi_{\Omega,+}=\Phi_{|\Phi|_\Omega,+},
   \qquad
   \Phi_{\Omega,-}=\Phi_{|\Phi|_\Omega,-}.
\]

The same argument applied to \(\Psi\) and \(\Lambda\) gives
\[
   \Psi_{\Lambda,+}=\Psi_{|\Psi|_\Lambda,+},
   \qquad
   \Psi_{\Lambda,-}=\Psi_{|\Psi|_\Lambda,-}.
\]

\end{proof}

\begin{example}
\label{ex-pers}
Consider Example~\ref{ex-min-env}:
\[
   \Phi:\mathbb C\to\mathbb M_2,
   \qquad
   \Phi(1)=
   \begin{pmatrix}
      1&0\\
      0&-1
   \end{pmatrix}.
\]
Then for \(r\in\mathbb R\),
\[
   \Omega_r(1)=
   \begin{pmatrix}
      \cosh r&\sinh r\\
      \sinh r&\cosh r
   \end{pmatrix}
\]
is a minimal Wittstock envelope of \(\Phi\). As in the proof of Proposition~\ref{pr-red-invar}, the minimality
of \(\Omega_r\) implies
\[
   |\Phi|_{\Omega_r}=\Omega_r .
\]

Hence the \(\Omega_r\)-relative positive and negative parts are
\[
   \Phi_{\Omega_r,+}
   =
   \frac{\Omega_r+\Phi}{2},
   \qquad
   \Phi_{\Omega_r,-}
   =
   \frac{\Omega_r-\Phi}{2}.
\]
These maps depend on \(r\). Thus the envelope-relative positive and negative
parts are not intrinsic, even when the chosen envelopes are minimal.

The same dependence appears for envelope-relative perspectives. For instance,
for \(g(t)=\sqrt t\), the completely positive map perspective is the geometric
mean, and therefore
\[
   P_{\sqrt{\cdot}}^{\Omega_r,\Omega_r;\mathrm{abs}}(\Phi,\Phi)
   =
   P_{\sqrt{\cdot}}(\Omega_r,\Omega_r)
   =
   \Omega_r .
\]
Consequently, if \(r\neq s\), then
\[
   P_{\sqrt{\cdot}}^{\Omega_r,\Omega_r;\mathrm{abs}}(\Phi,\Phi)
   \neq
   P_{\sqrt{\cdot}}^{\Omega_s,\Omega_s;\mathrm{abs}}(\Phi,\Phi).
\]
Thus envelope-relative perspectives  depend on the chosen
minimal envelope.
\end{example}


We first note that the construction recovers Okayasu's completely positive map
perspective in the positive case. Indeed, when the maps are completely positive
and are used as their own envelopes, the envelope-relative perspective reduces
to Okayasu's perspective.
\begin{proposition}
\label{pr-compat}
Let  $\Phi,\Psi:\mathscr A\to B(\mathcal H)$ be completely positive maps. Consider them as their own Wittstock envelopes, i.e.,
\[
   \Omega=\Phi,
   \qquad
   \Lambda=\Psi .
\]
Then
\[
   P_g^{\Phi,\Psi;+,+}(\Phi,\Psi)
   =
   P_g(\Phi,\Psi)\qquad\text{and}\qquad
   P_g^{\Phi,\Psi;\mathrm{abs}}(\Phi,\Psi)
   =
   P_g(\Phi,\Psi).
\]
\end{proposition}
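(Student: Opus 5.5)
The plan is to compute the signed Radon--Nikodym derivative of \(\Phi\) with respect to itself and show that it is the identity. Then \(\Phi_{\Phi,+}=\Phi\), \(\Phi_{\Phi,-}=0\) and \(|\Phi|_\Phi=\Phi\), and likewise for \(\Psi\). Both displayed identities then follow immediately from the definitions of the sector and absolute perspectives.

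First I will check that \(\Phi\in\mathsf E(\Phi)\). Since \(\Phi\) is completely positive, \(\Phi-\Phi=0\) and \(\Phi+\Phi=2\Phi\) are completely positive, so \(-\Phi\leq_{\mathrm{cp}}\Phi\leq_{\mathrm{cp}}\Phi\). Next, fix a minimal Stinespring representation \(\Phi(a)=V^*\pi(a)V\). The signed derivative is built from Arveson derivatives of
\[
\frac{\Phi+\Phi}{2}=\Phi
\quad\text{and}\quad
\frac{\Phi-\Phi}{2}=0 .
\]
We have \(\Phi(a)=V^*\pi(a)IV\) and \(0=V^*\pi(a)0V\), with \(I,0\in\pi(\mathscr A)'\). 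By uniqueness in Arveson's Radon--Nikodym theorem (which uses minimality of the representation), \(D_+=I\) and \(D_-=0\), hence \(D_\Phi^\Phi=I\).

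The Jordan decomposition of \(I\) in \(\pi(\mathscr A)'\) is \(I_+=I\), \(I_-=0\), \(|I|=I\). Therefore
\[
\Phi_{\Phi,+}(a)=V^*\pi(a)V=\Phi(a),\qquad \Phi_{\Phi,-}=0,\qquad |\Phi|_\Phi(a)=V^*\pi(a)V=\Phi(a).
\]
The same computation with \(\Psi\) gives \(\Psi_{\Psi,+}=|\Psi|_\Psi=\Psi\). Substituting into the definitions yields
\[
P_g^{\Phi,\Psi;+,+}(\Phi,\Psi)=P_g(\Phi_{\Phi,+},\Psi_{\Psi,+})=P_g(\Phi,\Psi)
\]
and
\[
P_g^{\Phi,\Psi;\mathrm{abs}}(\Phi,\Psi)=P_g(|\Phi|_\Phi,|\Psi|_\Psi)=P_g(\Phi,\Psi).
\]

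The only step needing care is the identification \(D_\Phi^\Phi=I\). It relies on the uniqueness part of Arveson's theorem, which is why the representation must be minimal. It also relies on the independence of the relative parts from the choice of minimal representation, already noted after the definition. Everything else is direct substitution, so I do not expect a genuine obstacle.
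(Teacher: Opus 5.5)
Your proposal is correct and follows the paper's proof: both identify the signed Radon--Nikodym derivative of \(\Phi\) with respect to itself as \(I\), deduce \(\Phi_{\Phi,+}=|\Phi|_\Phi=\Phi\) and \(\Phi_{\Phi,-}=0\) (and likewise for \(\Psi\)), and then substitute into the definitions. Your added derivation of \(D_+=I\) and \(D_-=0\) via uniqueness in Arveson's theorem supplies a detail the paper simply asserts.
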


\begin{proof}
Since \(\Phi\) is completely positive, \(\Phi\in\mathsf E(\Phi)\). With
respect to the envelope \(\Phi\), the Radon--Nikodym derivative of \(\Phi\)
with respect to itself is \(I\). Hence
\[
   |\Phi|_\Phi=\Phi,
   \qquad
   \Phi_{\Phi,+}=\Phi,
   \qquad
   \Phi_{\Phi,-}=0.
\]
Similarly,
\[
   |\Psi|_\Psi=\Psi,
   \qquad
   \Psi_{\Psi,+}=\Psi,
   \qquad
   \Psi_{\Psi,-}=0.
\]
Therefore
\[
   P_g^{\Phi,\Psi;+,+}(\Phi,\Psi)
   =
   P_g(\Phi_{\Phi,+},\Psi_{\Psi,+})
   =
   P_g(\Phi,\Psi).
\]
Also,
\[
   P_g^{\Phi,\Psi;\mathrm{abs}}(\Phi,\Psi)
   =
   P_g(|\Phi|_\Phi,|\Psi|_\Psi)
   =
   P_g(\Phi,\Psi).
\]
\end{proof}


\begin{remark}
When \(g\) is operator monotone, Okayasu's completely positive map perspective
\(P_g\) is monotone in each variable. Hence the envelope-relative sector
perspectives inherit the same monotonicity on each fixed sector. More precisely,
if
\[
   \Phi_{\Omega,\varepsilon}
   \leq_{\mathrm{cp}}
   \Phi'_{\Omega',\varepsilon},
   \qquad
   \Psi_{\Lambda,\eta}
   \leq_{\mathrm{cp}}
   \Psi'_{\Lambda',\eta},
\]
then
\[
   P_g^{\Omega,\Lambda;\varepsilon,\eta}(\Phi,\Psi)
   \leq_{\mathrm{cp}}
   P_g^{\Omega',\Lambda';\varepsilon,\eta}(\Phi',\Psi').
\]
This is simply the monotonicity of \(P_g\) applied to the completely positive
maps
\[
   \Phi_{\Omega,\varepsilon},\quad \Psi_{\Lambda,\eta}.
\]

Moreover, let \(\sigma\) be a Kubo--Ando mean, and
\[
   P_\sigma(\Theta_1,\Theta_2):=\Theta_1\sigma\Theta_2
\]
be the associated completely positive map mean. Since every Kubo--Ando mean is
bounded above by the arithmetic mean, we have
\[
   0\leq \Theta_1\sigma\Theta_2
   \leq \frac12(\Theta_1+\Theta_2)
\]
for completely positive maps \(\Theta_1,\Theta_2\). Applying this to the
relative parts gives, for every \(\varepsilon,\eta\in\{+,-\}\),
\[
   0\leq
   P_\sigma^{\Omega,\Lambda;\varepsilon,\eta}(\Phi,\Psi)
   \leq
   \frac12\bigl(\Phi_{\Omega,\varepsilon}
                +\Psi_{\Lambda,\eta}\bigr).
\]
Similarly, applying the same bound to the relative moduli gives
\[
   0\leq
   P_\sigma^{\Omega,\Lambda;\mathrm{abs}}(\Phi,\Psi)
   \leq
   \frac12\bigl(|\Phi|_\Omega+|\Psi|_\Lambda\bigr).
\]

\end{remark}


\end{document}